\documentclass[a4paper,11pt,reqno]{amsart}
\usepackage{amsmath}
\usepackage{amsthm,enumerate}
\usepackage{mathtools}
\usepackage{enumitem}

\usepackage{graphicx}
\usepackage{amssymb}

\usepackage{tikz}
\usepackage{tikz-qtree}
\usepackage{comment}
\usetikzlibrary{calc}
\usetikzlibrary{patterns}
\usetikzlibrary{patterns.meta}
\usetikzlibrary{trees}
\usepackage[toc,page]{appendix}

\usepackage[utf8]{inputenc}

\usepackage{color}

\usepackage{url}

\usepackage[text={6.5in,8.6in},centering]{geometry}

\usepackage{srcltx}

\usepackage[T1]{fontenc}

\theoremstyle{plain}
\newtheorem{thm}{Theorem}[section]
\theoremstyle{plain}
\newtheorem{lem}[thm]{Lemma}
\newtheorem{prop}[thm]{Proposition}

\theoremstyle{definition}
\newtheorem{defi}[thm]{Definition}
\newtheorem{rem}[thm]{Remark}

\newcommand{\R}{\mathbb{R}}

\newcommand{\hn}{V}

\newcommand{\GM}{\mathbb{G}}
\newcommand{\N}{\mathbb{N}}

\newcommand{\dg}{\deg}

\numberwithin{equation}{section} \allowdisplaybreaks

\begin{document}
 \title[Fractional Porous Medium Equation on graphs]{The fractional Porous Medium Equation\\ on   graphs}

\author[Berchio]{Elvise Berchio}
\address{Dipartimento di Scienze Matematiche ``Giuseppe Luigi Lagrange'',
  Politecnico di Torino, Corso Duca degli Abruzzi 24, 10129 Torino,
  Italy}
\email{elvise.berchio@polito.it}

\author[Santagati]{Federico Santagati}
\address{Dipartimento di Scienze Matematiche ``Giuseppe Luigi Lagrange'',
  Politecnico di Torino, Corso Duca degli Abruzzi 24, 10129 Torino,
  Italy}
\email{federico.santagati@polito.it}

\author[Vallarino]{Maria Vallarino}
\address{Dipartimento di Scienze Matematiche ``Giuseppe Luigi Lagrange'',
  Politecnico di Torino, Corso Duca degli Abruzzi 24, 10129 Torino,
  Italy}
\email{maria.vallarino@polito.it}

\date{}


\keywords{Porous medium equation; a priori estimates; smoothing effects; graphs; trees; Green function.}

\subjclass[2010]{Primary: 35R01. Secondary: 35K65, 35R11, 05C63, 47D07.}

\begin{abstract}
We study the fractional porous medium equation on connected infinite graphs with no local finiteness assumption.
We introduce a notion of weak dual solution adapted to the discrete setting, and establish existence results for nonnegative initial data belonging to a weighted space defined through the fractional Green function, extending beyond the classical $\ell^1$ framework. Our approach relies on weighted estimates and on a detailed analysis of the associated fractional Green function. In the particular case of infinite trees with standard weights, we establish comparison principles and derive estimates for the fractional Green function, which lead to quantitative smoothing effects for solutions. 
\end{abstract}

\maketitle


\normalsize


 \section{Introduction}
The study of nonlinear diffusion equations on discrete settings has attracted increasing attention in recent years, motivated both by applications and by the deep interplay between analysis, probability, and geometry on graphs,   {see e.g., \cite{BBSV,GMeP,Hu-Wang,Hua-Mugnolo,Lenz-Schmidt-Zimmermann,Lin-Wu2,Lin-Yang,MPS,Punzo-Sacco} and references therein}.  Given a connected infinite graph $\Gamma=(V,b)$, this paper is devoted to the study of existence and smoothing estimates of solutions to the fractional porous medium equation on $\Gamma$. More precisely, we consider nonnegative solutions to the Cauchy problem:
\begin{equation}\label{NFDE}
\left \{ \begin{array}{ll}
\partial_t u (t,x)+ (- \Delta)^s u^m(t,x)= 0\,, &  (t,x)\in  (0, \infty) \times V,\\
u(0,x)=u_0(x)\ge0\,, & x\in V,
\end{array}
\right.
\end{equation}
where $0<s\leq 1$ and $m>1$. See Subsections \ref{1} and \ref{2} for the precise definition of the graph setting and of the operator.

When $s=1$, the equation reduces to the classical porous medium equation on graphs. In the Euclidean space, porous medium type equations have been extensively studied and a comprehensive account of the theory can be found in the monograph \cite{V}. The analysis includes existence and uniqueness of solutions, regularizing effects, asymptotic behavior and qualitative properties. More recently, strong attention has been devoted to nonlocal nonlinear diffusion equations, see for instance  \cite{BFR, BFV,BSV, BV1,Vjems}, where smoothing estimates, sharp quantitative bounds and propagation properties have been established.

On Riemannian manifolds, the geometry strongly influences the behavior of nonlinear diffusion equations. In particular, curvature assumptions and volume growth conditions play a crucial role in determining smoothing effects, asymptotic profiles and admissible classes of initial data. We refer to \cite{GMP2,GMP,GMV,GMV-MA,V4} and the references therein for some representative contributions. Fractional porous medium equations on manifolds have been recently studied in \cite{BBGG,BBGM}, where the analysis combines nonlocal methods with geometric estimates.

To the best of our knowledge, the fractional porous medium equation on infinite graphs has not yet been investigated. In the local case $s=1$,  existence and uniqueness results for the porous medium equation with $\ell^1$ initial data on graphs were proved in \cite{BSW}, see also \cite{Ma} where a logarithmic diffusion equation of porous-medium type is considered. Aronson-Bénilan and Harnack estimates were obtained in \cite{KZ}, and nonexistence results for nonlinear diffusion equations with reaction terms were established in \cite{vC}.

The first aim of this paper is to determine proper assumptions so that nonnegative \emph{weak dual solutions} (WDS) to \eqref{NFDE} exist. This notion was originally introduced in the Euclidean setting in \cite{BV2} and subsequently developed for several classes of nonlinear nonlocal equations on bounded domains, see e.g.,  \cite{BFR,BFV,BSV, BV3, BV1,Vjems}. In particular, the WDS definition is based on a reformulation of the equation through the inverse fractional operator and the corresponding existence theory is usually developed by combining nonlinear semigroup techniques with potential-theoretic methods based on the Green function associated with the operator. A general framework where this approach works, including smoothing effects and functional inequalities, is provided in \cite{BE23}. On manifolds, related techniques have been employed in \cite{BBGG,BBGM,GMP0}.

As far as we know, WDS have never been considered in the framework of graphs. Although our approach is inspired by the theory developed for nonlocal nonlinear diffusion equations in continuous settings, its extension to graphs requires substantial modifications and cannot be regarded as a straightforward discrete counterpart of the existing theory. Indeed, several structural features of the graph setting differ significantly from their continuous analogues. First, unlike several existing works on manifolds and graphs, our analysis does not require bounded geometry assumptions, volume doubling properties, or any control on the growth of metric balls. The only structural assumption is a Nash inequality which yields heat kernel estimates on which our analysis is based. In particular, no spectral gap is required for the Laplacian. Second, under our assumptions the fractional Green function turns out to be bounded, in sharp contrast with the singular behavior typically exhibited by Green kernels, e.g, in Euclidean spaces. Moreover, the equation is naturally interpreted pointwise on the vertices rather than in a distributional sense, and the very definition of WDS has to be adapted accordingly. Finally, the scale of spaces $\ell^p(V)$ behaves differently from the $L^p$-scale in continuous settings.

 We also point out that besides providing a natural extension of the continuous theory, the introduction of the WDS notion is essential for our purposes. Indeed, it allows us to exploit directly the fractional Green function associated with the graph Laplacian and to derive weighted $\ell^1$-estimates that are not available at the level of mild solutions. Such estimates constitute the key tool for constructing, in Subsection \ref{subsectionexistence} , WDS corresponding to initial data belonging to a weighted Green-function space which strictly contain $\ell^1(V)$ and they provide the starting point for the smoothing effects proved in Section \ref{trees}.

The second main goal of the paper is to investigate the regularizing effects enjoyed by WDS on trees, i.e. infinite graphs without loops, with standard weights,  satisfying a uniform lower bound on the number of neighbors. In particular, a substantial part of the paper is devoted to establishing new integral estimates for the heat kernel and the fractional Green function on trees through comparison arguments with homogeneous trees. Besides being instrumental for our analysis, these estimates are of independent interest, and may be useful in the study of other nonlinear and nonlocal equations. Exploiting the obtained estimates for the fractional Green function we prove quantitative smoothing estimates of WDS of \eqref{NFDE} corresponding to data in $\ell^1(V)$ and also for initial data belonging to weighted Green-function spaces (see Theorems \ref{thm.smoothing.HN-like} and \ref{thm.smoothing.pesato} below).

%
%
%
The paper is organized as follows. In Section \ref{main} we introduce the graph framework, the fractional Laplacian and the associated Green function, and we state key estimates of the Green function on graphs. Section \ref{WDS_sect} is devoted to the definition of WDS, to the proof that mild solutions with initial data in $\ell^1(V)$ are WDS and some related a-priori estimates. Moreover, in Theorem \ref{existenceWDS} we prove existence of WDS for initial data in the above mentioned weighted space defined through the Green function. Section \ref{trees} contains the smoothing effects on trees for initial data in $\ell^1$ and on the weighted space. The final section is devoted to the proof of several auxiliary results on trees, including comparison principles, {monotonicity results} and integral heat kernel estimates, which are used in the {proofs of the integral estimates} of the fractional Green function. 

Throughout the paper, $C$ denotes a positive constant that may vary from line to line. Dependence on relevant parameters will be indicated when needed.

 \section{Preliminaries and Green functions estimates}\label{main}
\subsection{Graph setting}\label{1}
We consider a {graph} $\Gamma=(V,b)$ where $V$ is a countable set of nodes and $b\colon V\times V \to [0,\infty)$ is a weight function satisfying the following properties:
\begin{enumerate}
	\item $b_{xy}=b_{yx}$ for every $x,y \in V$;
	\item $b_{xx}=0$ for every $x \in V$;
	\item $\deg(x)=\sum_{y\in V} b_{xy} < \infty$ for every $x \in V$, and $\deg(x)$ is called the degree of the node $x$.
\end{enumerate}
Vertices $x,y\in V$ are said to be neighbors if $b_{xy}>0$ and we write $x\sim y$ in this case. Notice that we do not assume that $\mathrm{deg}$ is a bounded function on $V$. Moreover, in general, (3) does not imply that each vertex has finitely many neighbors.

We denote by $d$ the standard discrete distance on $V$, by $C(V)$ the set of all real valued functions on $V$, and by $C_c(V)$ the subset of compactly supported (i.e., finitely supported) functions in $C(V)$. Throughout the paper, \( C(V) \) is endowed with the topology of pointwise convergence, i.e., the Fréchet topology generated by the seminorms \( p_x(f) = |f(x)| \), \( x \in V \). For $1\leq p<\infty$ the standard $\ell^p(V) $ space is defined as 
$$
\ell^p(V)=\Big\{u\in C(V):\Big( \sum_{x\in V}|u(x)|^p\deg(x)\Big)^{1/p}=\|u\|_{\ell ^p(V)}<\infty\Big\},
$$
and we denote by $\ell^{\infty}(V)$ the set of real functions $u$ on $V$ such that $\|u\|_{\infty}=\sup_{x\in V}|u(x)|<\infty$.

We shall also assume the following condition:
  there exists a number $n>2$ such that the Nash inequality holds
    \begin{align}\label{Nash}
        \|u\|_{\ell^2(V)}^{2+4/n}
        \le
        C\,\mathcal E (u,u)\,
        \|u\|_{\ell^1(V)}^{4/n} \qquad \forall u\in C_c(V),
    \end{align}
 where $$\mathcal{E}(u,u)=\frac{1}{2}\sum_{x,y\in V}b_{xy}(u(y)-u(x))^2.$$

   \subsection{The Laplacian, the fractional Laplacian and its left-inverse}\label{2}
We define the Laplacian on $\Gamma = (V,b)$ by
\begin{align*}
(-\Delta u)(x) = u(x) - \frac{1}{\deg(x)} \sum_{y \in V} b_{xy}\,u(y),
\qquad x \in V,
\end{align*}
for every $u$ in the domain
\[
\mathrm{dom}(-\Delta)
= \left\{ f \in C(V) \;:\; \sum_{y \in V} b_{xy}\,|f(y)| < \infty \ \text{for every } x \in V \right\}.
\]
In particular, all spaces $\ell^p(V)$, with $1\leq p\le \infty$, are included in $\mathrm{dom}(-\Delta)$. 

Notice that $-\Delta =I-P$, where $P$ is the operator whose kernel with respect to $\deg$ is 
$$
p(x,y)=
\frac{ b_{xy}}{\deg(x)\deg(y)}\qquad \forall x,y\in V,$$
and $$\sum_{y \in V} p(x,y) \, \deg(y)=1\qquad \forall x\in V.$$
Hence, $-\Delta$ is bounded on $\ell^p(V)$, for every $1\leq p\leq \infty$, and generates a strongly continuous contraction semigroup $e^{t\Delta}$ on $\ell^1(V)$ which is Markovian. We point out that, by definition of the Laplacian, inequality \eqref{Nash} can be reformulated as 
$$
  \|u\|_{\ell^2(V)}^{2+4/n}
        \le
        C\, \langle -\Delta u,u\rangle_{\ell^2(V)}\,
        \|u\|_{\ell^1(V)}^{4/n} \qquad \forall u\in C_c(V).
$$

For $0<s<1$ the fractional Laplacian is defined as follows:
$$
 (-\Delta)^s u(x) = \frac1{\Gamma(-s)}\int_0^{+\infty}   \left( \sum_{y \in V} h_t(x,y)\left(u(y)-u(x)\right)  \deg(y) \right)\, \frac{\rm{d}t}{t^{1+s}},
$$
where $\Gamma(-s)$ denotes the classical Gamma function while $ h_t$ denotes the   {kernel of the heat semigroup  $e^{t\Delta}$ with respect to the measure $\deg$}.  {The operator $(-\Delta)^s$} is bounded on $\ell^p(V)$, for every $1\leq p\leq \infty$, and by Bochner subordination, it  generates a strongly continuous contraction semigroup on $\ell^1(V)$ which is still sub-Markovian and in particular order-preserving,  see \cite[Corollary 4.3.4]{J}. For related semigroup-based definitions of fractional Laplace operators on locally finite graphs, we refer to \cite{ZLY}.

Then, for every $s \in (0,1]$, we define the fractional Green function as 
\begin{equation*}
\GM^s(x,y)= \frac1{\Gamma(s)}\int_0^{+\infty}\frac{h_t(x,y)}{t^{1-s}}\,{\rm{d}t}, 
\end{equation*}
and
\begin{equation*}
(-\Delta)^{-s} u (x)  =  \sum_{y \in V} u (y) \, \GM^s(x,y)\, \deg(y)  \,.
\end{equation*}
 We sometimes write $\GM$ in place of $\GM^1$.
By \cite[Theorem~3.25]{CarlenKusuokaStroock87} (see also \cite{Rose2025}),
the Nash inequality \eqref{Nash} implies the on-diagonal estimate
\begin{align}\label{ultracontr1}
    \sup_{x,y\in V} h_t(x,y) \le C\, t^{-n/2}\qquad \forall t>0,
\end{align}
or, equivalently, the ultracontractive bound
\begin{align}\label{ultracontr}
    \|e^{t\Delta}\|_{\ell^1(V)\to \ell^\infty(V)}
    \le
    C\, t^{-n/2}\qquad \forall t>0.
\end{align}

Another consequence of \eqref{Nash} is that the measure $\deg$ is uniformly non-degenerate, namely there exists a constant $d_0>0$ such that
\begin{align}\label{deg}
\deg(x)\ge d_0
\qquad \forall x\in V.
\end{align}

To see this, it is enough to test \eqref{Nash} with $u=\delta_x$, the Dirac delta at $x$. Indeed,
\begin{align*}
\deg(x)^{1+2/n}
=\|\delta_x\|_{\ell^2(V)}^{2+4/n}
\le C\,\mathcal E(\delta_x,\delta_x)\,
\|\delta_x\|_{\ell^1(V)}^{4/n}
=C\,\deg(x)^{1+4/n},
\end{align*}
which yields \eqref{deg} with $d_0=C^{-n/2}$. In turn, by \eqref{deg} it follows that $\|u\|_{\ell^{\infty}(V)}\leq d_0^{-\frac{1}{p}}\|u\|_{\ell^p(V)}$ and $\|u\|_{\ell^q(V)}\leq d_0^{\frac{p-q}{pq}}\|u\|_{\ell^p(V)}$ for every $1\leq p<q< \infty$ and $u\in \ell^p(V)$. Moreover, by \cite[Theorem~10, Corollary~11]{Davies93}, \eqref{deg} also implies the uniform estimate
\begin{align}\label{ultraConst}
    \sup_{x,y\in V} h_t(x,y) \le C \qquad \forall t>0.
\end{align}
Combining \eqref{ultracontr} and \eqref{ultraConst}, we obtain
\begin{align}\label{stimaheat}
    \|e^{t\Delta}\|_{\ell^1(V)\to \ell^\infty(V)}
    \le
    C\,\min\{1,t^{-n/2}\}\qquad \forall t>0.
\end{align}

\begin{rem}\label{rembottom}
It is worth noting that assuming \eqref{deg}, the Nash inequality \eqref{Nash} holds whenever $-\Delta$ has positive spectral gap, namely,
    \begin{equation}\label{bottompositivo}
        \inf\{z\in\sigma_2(-\Delta)\}=b_2>0.
    \end{equation}
    Indeed, in this case
    \[
        \langle- \Delta u,u\rangle_{\ell^2(V)}
        \ge
        b_2\,\|u\|_{\ell^2(V)}^2,
    \]
    and \eqref{Nash} (with $C=d_0^{2/n} b_2$ for any $n>0$) follows by recalling that \eqref{deg} implies that $\sqrt{d_0} \|u\|_{\ell^2(V)}\leq \|u\|_{\ell^1(V)}$. On the other hand, there are examples in which \eqref{Nash} holds while
    \[
        \inf\{z\in\sigma_2(-\Delta)\}=0.
    \]
    A typical example is the lattice $\mathbb{Z}^n$ with $n\ge3$
    (see \cite[Proposition~5.1]{GrigoryanTelcs01} and
    \cite[Chapter~1, Eq.~(1.4)]{Woess00}).

    
 \end{rem}

\subsection{Green functions estimates}\label{Greentrees}

An important consequence of \eqref{stimaheat} is the uniform boundedness of the fractional Green function:
    \[
        \sup_{x,y\in V} \GM^s(x,y)\leq  C_{s,n}= \frac{C}{\Gamma(s)}
\left(\frac1s+\frac1{\frac n2-s}\right)
        \qquad\text{for every } s\in(0,1]\,.
    \]

To obtain a useful pointwise estimate of the fractional Green function, we first notice that, since $e^{t\Delta}
=
e^{-t}e^{tP}
$  {and $P$ is bounded on every $\ell^p(V)$,} expanding the exponential
we get that
\[
h_t(x,y)
=
e^{-t}
\sum_{n=0}^\infty
\frac{t^n}{n!}
\,p_n(x,y),
\]
where $p_n(x,y)=(P^n)(x,y)$ is the $n$–step transition probability of the random walk with respect to $\deg$. Substituting this expression into the integral representation of $\GM^s$ gives
\[
\GM^s(x,y)
=
\frac{1}{\Gamma(s)}
\int_0^\infty
t^{s-1}e^{-t}
\sum_{n=0}^\infty
\frac{t^n}{n!}p_n(x,y)\,dt.
\]
Since all terms are nonnegative we may interchange sum and integral
and using the identity $\int_0^\infty t^{n+s-1}e^{-t}\,dt=\Gamma(n+s)$,
we obtain the alternative representation
\begin{align}\label{formulaGS}
\GM^s(x,y)
=
\sum_{n=0}^\infty
\frac{\Gamma(n+s)}{\Gamma(s)\,n!}\;
p_n(x,y).
\end{align}
By Stirling's formula,
$\frac{\Gamma(n+s)}{n!}
\sim
n^{\,s-1}$ as $n\to\infty$,
and the fact that $p_n(x,y)=0$ if $n<d(x,y)$, we deduce that
\begin{equation}\label{stimapuntualeGs}
\begin{aligned}
    \GM^s(x,y)&\leq C\sum_{n=d(x,y)}^\infty (n+1)^{s-1}p_n(x,y) \\&\le  \frac{C}{(d(x,y)+1)^{1-s}}  \sum_{n=d(x,y)}^\infty p_n(x,y)\\&= \frac{C}{(d(x,y)+1)^{1-s}} \, \GM(x,y) \qquad \forall x,y \in V\,.
\end{aligned}
\end{equation}

\smallskip

We can obtain sharper estimates for the fractional Green function in the setting of trees with standard weights, as a consequence of a comparison result between general and homogeneous trees, whose proof is postponed to Section~\ref{last}.


Let $T$ be a tree with set of vertices $V$  {and standard weights}, i.e., an infinite graph without loops, such that $b_{xy}\in  {\{0,1\}}$. We assume that there exists an integer $q\geq 2$ such that  {\begin{align}\label{degq+1}\mathrm{deg}(x) \ge q+1   \qquad x \in V.\end{align}} 
By \cite[Theorem 4.3]{Woj}, condition \eqref{degq+1} implies 
$$
b_2=\inf\{z\in\sigma_2(-\Delta)\}\geq \frac{(q-1)^2}{2(q+1)^2}>0.
$$
Hence by Remark \ref{rembottom} the Nash inequality \eqref{Nash} holds.

We denote by $T_q$ the homogeneous tree of degree $q+1$, i.e. the tree  {with standard weights} where $\deg$ is uniformly equal to $q+1$, and by $V_q$ its set of vertices. We shall denote by $-\Delta$ and
$-\Delta^{(q)}$ the Laplacians on $C(V)$ and $C(V_q)$, respectively, by $h_t$ and $h_t^{(q)}$
the corresponding heat kernels, and by $\GM^s$ and $\GM_q^s$ the corresponding fractional Green
functions. Finally, $d$ and $d_q$ stand for the discrete distances on $V$ and $V_q$, respectively.

The following comparison result holds. 


\begin{prop} \label{intestimateGreentrees}
 {Let $T$ be a tree with standard weights such that \eqref{degq+1} holds.} Then, for every $x,o\in V$, $r \in \mathbb N$ and $t>0$ 
\begin{align}\label{comp:heat}
    \sum_{y \in B_r(o)} h_t(x,y) \mathrm{deg}(y) \le (q+1)\sum_{y_q \in B_r(o_q)} h_t^{(q)}(x_q,y_q),
\end{align} where $x_q, o_q \in V_q$  are such that $d_q(x_q,o_q)=d(x,o)$.
In particular,
\[
h_t(o,x)\le \frac{q+1}{\mathrm{deg}(x)} h_t^{(q)}(o_q,x_q) \le h_t^{(q)}(o_q,x_q),
\] and 
\begin{align}\label{comp:green}
    \sum_{y \in B_r(o)} \GM^s(x,y) \mathrm{deg}(y) \le  (q+1)\sum_{y_q \in B_r(o_q)} \GM^s_q(x_q,y_q). 
    \end{align}  
\end{prop}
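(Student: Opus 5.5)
The plan is to reduce \eqref{comp:heat} to a comparison of random-walk probabilities and then integrate in time. From the expansion $h_t(x,y)=e^{-t}\sum_{n\ge0}\frac{t^n}{n!}p_n(x,y)$ of Subsection~\ref{Greentrees}, all coefficients $e^{-t}t^n/n!$ are nonnegative. So it suffices to prove, for every $n\in\N$,
\[
\sum_{y\in B_r(o)} p_n(x,y)\deg(y)\;\le\;(q+1)\sum_{y_q\in B_r(o_q)} p^{(q)}_n(x_q,y_q).
\]
Since $p_n$ is the kernel of $P^n$ with respect to $\deg$, the left side equals $\mathbb P_x(X_n\in B_r(o))$, where $(X_n)$ is the simple random walk on $T$. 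On $T_q$ the degree is constantly $q+1$, so the right side equals $\mathbb P_{x_q}(Y_n\in B_r(o_q))$, where $(Y_n)$ is the simple random walk on $T_q$. The claim therefore becomes $\mathbb P(D_n\le r)\le \mathbb P(D'_n\le r)$, with $D_n=d(X_n,o)$ and $D'_n=d_q(Y_n,o_q)$.

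The key structural fact is that on a tree, from a vertex $z\neq o$ exactly one neighbor is closer to $o$ and the other $\deg(z)-1$ neighbors are farther. Hence $D_n$ steps down with probability $1/\deg(X_n)\le 1/(q+1)$ and up otherwise; from $o$ it always steps up. On $T_q$, $D'_n$ is the birth--death chain on $\N$ that steps down with probability exactly $1/(q+1)$ away from $0$ and reflects at $0$. I will couple the two chains using a common sequence of i.i.d.\ uniform variables $U_n$:
\begin{itemize}
\item $D$ steps down iff $U_n<1/\deg(X_n)$ (and $D\neq 0$);
\item $D'$ steps down iff $U_n<1/(q+1)$ (and $D'\neq0$).
\end{itemize}
Both chains start at $d(x,o)=d_q(x_q,o_q)$ and move by $\pm1$ at each step, so $D_n-D'_n$ is always even. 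I then check by induction that $D_n\ge D'_n$:
\begin{itemize}
\item If $D_n=D'_n$, then whenever $D$ steps down so does $D'$, because $1/\deg\le 1/(q+1)$. At $0$ both step up.
\item If $D_n\ge D'_n+2$, a single step cannot reverse the order.
\end{itemize}
This stochastic domination gives $\mathbb P(D_n\le r)\le\mathbb P(D'_n\le r)$. To be safe about the Markov structure of $D_n$, I would realize $X_n$ itself through the uniforms: choose the parent if $U_n<1/\deg$, else a child chosen by an independent variable. Then the coupling is legitimate.

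Multiplying by $e^{-t}t^n/n!$ and summing over $n$ yields \eqref{comp:heat}. For the pointwise bound I take $r=0$, which gives $h_t(x,o)\deg(o)\le (q+1)h^{(q)}_t(x_q,o_q)$. I then exchange the roles of $x$ and $o$ and use the symmetry of $h_t$ and $h^{(q)}_t$ to get $h_t(o,x)\le\frac{q+1}{\deg(x)}h_t^{(q)}(o_q,x_q)$. The final inequality $\frac{q+1}{\deg(x)}\le 1$ is exactly \eqref{degq+1}. For \eqref{comp:green}, I multiply \eqref{comp:heat} by $t^{s-1}/\Gamma(s)$ and integrate over $(0,\infty)$, exchanging sum and integral by Tonelli since everything is nonnegative.

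The main obstacle is the coupling step. I need to justify rigorously that the distance process on a general tree, which is not Markov on its own, is dominated step by step. The construction driven by the uniforms $U_n$ described above handles this, and the parity observation is what prevents crossing.
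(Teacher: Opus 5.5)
Your argument is correct, and it takes a genuinely different route from the paper. The paper works with the continuous-time heat equation. It shows that the transplanted function $w(t,x)=(q+1)\sum_{y\in B_r(o_q)}h^{(q)}_t(x_q,y)$ is radial and 2-step decreasing, using the Bessel-function representation of $h^{(q)}_t$ and a case-by-case count of sphere--ball intersections in $T_q$. Lemma~\ref{lap_rad} then makes $w$ a supersolution on $T$. Finally, a maximum principle on the infinite graph is applied to $z=v-w$, which needs the separate decay-at-infinity estimate of Lemma~\ref{lem:heat}.

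You instead use the Poisson expansion $h_t=e^{-t}\sum_n \frac{t^n}{n!}p_n$ from Subsection~\ref{Greentrees} and a monotone coupling of the radial parts of the two discrete-time simple random walks. Your parity observation plays exactly the role of the 2-step monotonicity in the paper. Since $D_n-D'_n$ is even, you only ever compare at equal distance, and there you use $1/\deg(X_n)\le 1/(q+1)$, which is precisely the inequality behind Lemma~\ref{lap_rad}.

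What your route buys:
\begin{itemize}
\item It is considerably more elementary and dispenses entirely with Lemmas~\ref{step}, \ref{lem:punz}, \ref{lem:heat} and Proposition~\ref{prop:sub}.
\item It proves more: $P^n\chi_{B_r(o)}(x)\le (P^{(q)})^n\chi_{B_r(o_q)}(x_q)$ for every $n$, where $P^{(q)}$ is the transition operator on $T_q$. In fact $d(X_n,o)$ stochastically dominates $d_q(Y_n,o_q)$.
\item Consequently the comparison holds for every radially nonincreasing test function and for any kernel that is a nonnegative combination of the $p_n$'s. For instance, \eqref{comp:green} follows directly from \eqref{formulaGS} without passing through $h_t$.
\end{itemize}

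What the paper's route buys: it is the discrete counterpart of heat-kernel comparison with model manifolds and stays within the analytic sub/supersolution framework. It also yields, as byproducts of independent interest, the 2-step monotonicity of $h^{(q)}_t$ and of its ball averages.
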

By \eqref{stimapuntualeGs} and Proposition \ref{intestimateGreentrees} applied with $r=0$, for all $x,y \in V$ we get 
\begin{equation*}
 \GM^s(x,y)\leq   \frac{C}{(d(x,y)+1)^{1-s}}\, \GM(x,y) \leq \frac{C}{(d(x,y)+1)^{1-s}}\, \GM_q(x_q,y_q), 
\end{equation*}
for some $x_q,y_q\in V_q$ such that  $d_q(x_q,y_q)=d(x,y)$. Finally, since   {$\GM_q(x_q,y_q)= \frac{q}{q-1}q^{-d_q(x_q,y_q)}$} (see e.g, \cite[Lemma 1.24]{Woess00}), we conclude that
\begin{equation}\label{stimaGs} 
 \GM^s(x,y)\leq    \frac{C}{(d(x,y)+1)^{1-s}}\, q^{-d(x,y)} \qquad \forall \, x,y\in V\,.
\end{equation}

 \section{Weak dual solutions (WDS)}\label{WDS_sect}

   {The aim of this section is to introduce the notion of WDS in the setting of graphs and to show that mild solutions belong to this class. This notion offers several advantages. In particular, it allows one to establish weighted $\ell^1$ a priori estimates involving the Green function, as well as existence results for solutions corresponding to initial data in a weighted space that strictly contains $\ell^1(V)$ (Remark~\ref{strict_incl}). These results are obtained on general graphs in Proposition~\ref{crucial} and Theorem~\ref{existenceWDS}. Moreover, in the special case of trees, where suitable estimates for the Green function are available, the weak dual formulation yields smoothing estimates (Section~\ref{trees}).
}

  \subsection{Definition of WDS}
By suitably adapting the definition given in \cite{BBGM} to the discrete setting, we consider the following {weighted space}, defined in terms of the fractional Green function:
 {\begin{equation*}\label{LG}
\ell^1_{\GM^s}(\hn) = \left \{ u \in C(V)  : \ \left\| u \right\|_{\ell^1_{\GM^s}(V)}  = \sup_{x_0\in V}   \left\| u \right\|_{\ell^1_{x_0,\GM^s}(V)}=\sup_{x_0\in V}\\\sum_{x \in V}  \left| u(x) \right| \GM^s(x ,x_0) \, \dg(x) \,  < +\infty  \right\}.
\end{equation*}}
Since $ \GM^s(x ,x_0)\in \ell^{\infty}(V)$ for all $ x_0 \in V $, it follows that $ \ell^1(V)  \subseteq  \ell^1_{\GM^s}(\hn)$. {As shown in Remark \ref{strict_incl}, this inclusion is strict whenever the bottom of the $\ell^2$-spectrum of $-\Delta$ is positive.  
 {\begin{rem}\label{strict_incl} Assume that $\Gamma=(V,b)$ is a graph with positive spectral gap, i.e., such that \eqref{bottompositivo} holds. Then  $\ell^1(V)$ is strictly contained in $\ell^1_{\GM^s}(V)$ for every $s\in(0,1]$.
Indeed, fix $x_0 \in V$. By \eqref{formulaGS}
\begin{align*}
    \GM^s(x_0,y) \ge C_s \sum_{n \ge 0} (1+n)^{s-1}p_n(x_0,y)   \qquad \forall y \in V.
\end{align*}
Since for every $n \ge 0$, $\displaystyle{\sum_{y \in V} p_n(x_0,y) \mathrm{deg}(y)=1},$ we deduce that 
\begin{align*}
    \sum_{y \in V} \GM^s(x_0,y) \mathrm{deg}(y)\ \ge C_s \sum_{n \ge 0} (1+n)^{s-1}\sum_{y \in V} p_n(x_0,y) \mathrm{deg}(y)=C_s\sum_{n \ge 0} (1+n)^{s-1}=+\infty,
\end{align*} since $s \in (0,1]$. This proves that $\GM^s(x_0,\cdot) \not\in \ell^1(V).$
Next, since $-\Delta$ has positive spectral gap, the operator  $(-\Delta)^{-s}$ is bounded on $\ell^2(V)$ and
\begin{align*}
    \sum_{y \in V} (\GM^s(y,x_0))^2 \mathrm{deg}(y)= \left\|(-\Delta)^{-s}\left(\frac{\delta_{x_0}}{\mathrm{deg}(x_0)}\right)\right\|_{\ell^2(V)}^2 \le c_{s} \frac{1}{\mathrm{deg}(x_0)},
\end{align*} for some $c_s>0$.
Since $\GM^s(y,x_0)=\GM^s(x_0,y)$, this proves that $\GM^s(x_0,\cdot) \in \ell^1_{x_0,\GM^s}(V)$. Finally, we show that $\GM^s(x_0,\cdot) \in \ell^1_{\GM^s}(V)$. Indeed, by \eqref{deg}, 

\begin{align*}
\|\GM^s(x_0,\cdot)\|_{\ell^1_{\GM^s}(V)}&
= \sup_{z \in V} \sum_{y \in V} \GM^s(x_0,y)\GM^s(z,y) \mathrm{deg}(y) \\ &\le \sup_{z \in V} \| \GM^s(x_0,\cdot)\|_{\ell^2(V)}\|\GM^s(z,\cdot)\|_{\ell^2(V)} \le \sup_{z \in V} \frac{c_s}{\sqrt{\mathrm{deg}(x_0)\mathrm{deg}(z)}}\le \frac{c_s}{d_0}.
\end{align*}
Hence, $\GM^s(x_0,\cdot) \in \ell^1_{\GM^s}(V) \setminus \ell^1(V).$
\end{rem}}

\begin{defi}\label{defi_WDS}
	
 Let $ u_0 \in \ell^1_{\GM^s}(V)$ with $ u_0 \ge 0 $. We say that a nonnegative measurable function $ u $    {on $[0,\infty) \times V$} is a Weak Dual Solution (WDS) to Problem~\eqref{NFDE} if, for every $T>0$:

\begin{itemize}
	
\item $u \in C^0([0, T];  \ell^1_{x_0,\GM^s}(V))$ for all $x_0\in V$;

\smallskip

\item  $u^m \in L^1( (0, T) ; C(V) ) $; 

\smallskip

\item $u$ satisfies the identity
\begin{equation}\label{def_eq}
\int_{0}^{T}  \partial_t \psi(t) \,  (- \Delta)^{-s} u(t,x)  \, {\rm d}t - \int_{0}^{T}   u^m(t,x)  \, \psi(t) \, {\rm d}t = 0
\end{equation}
for every test function $\psi \in C^1_c(0,T)$ and every $x \in V$;

\smallskip

\item $u(0,\cdot)=u_0$ in $V$.

\end{itemize}

\end{defi}
\begin{rem}\label{Rem_Delta_s_cont}
Equation \eqref{def_eq} makes sense because the function $t\mapsto  (- \Delta)^{-s} u(t,x) $ is integrable on $[0,T]$ since $u \in C^0([0, T];  \ell^1_{x,\GM^s}(V))$ for all $x \in V$. Indeed, for every $t,\tau \in[0,T]$ and every $x\in V$
\begin{equation*}
| (- \Delta)^{-s} u(t,x)- (- \Delta)^{-s} u(\tau,x)| \le \sum_{y\in V} |u(t,y)-  u(\tau,y)| \GM^s(x,y)\deg(y)\leq \|u(t)-u(\tau)\|_{ \ell^1_{x,\GM^s}(V)}\,.
\end{equation*} 
\end{rem}

\begin{rem} Definition \ref{defi_WDS} can be equivalently stated by replacing \eqref{def_eq} with the following equation
  \begin{equation}\label{def_eq2}
\int_{0}^{T} \sum_{x \in V} \partial_t \psi(t,x) \,  (- \Delta)^{-s} u(t,x)  \, \deg(x){\rm d}t - \int_{0}^{T} \sum_{x \in V}  u^m(t,x)  \, \psi(t,x) \, \deg(x){\rm d}t = 0,
\end{equation}
for every test function $\psi \in C^1_c((0,T);C_c(V))$. 

%
%

As for the equivalence of \eqref{def_eq} and \eqref{def_eq2}, observe that taking $ \psi(t,x)=\psi(t) \frac{\delta_{x_0}(x)}{\deg(x_0)}$ in \eqref{def_eq2} immediately yields \eqref{def_eq}. Conversely, under the assumption of Definition \ref{defi_WDS}, testing \eqref{def_eq} with $\psi \in C^1_c((0,T);C_c(V))$ and summing with respect to $x\in V$ one gets \eqref{def_eq2}.

Finally, for future purposes we point that a nonnegative WDS $u$ satisfies the following identity as well: 
\begin{equation}\label{def_eq3}
\int_{0}^{T}\partial_t \psi(t) \sum_{x \in V}  \,  (- \Delta)^{-s} \varphi(x) u(t,x)\, \deg(x)  \, {\rm d}t - \int_{0}^{T} \psi(t)  \sum_{x \in V}  u^m(t,x)  \, \varphi(x)\,  \deg(x)\, {\rm d}t = 0
\end{equation}
for every test function $\psi \in C^1_c(0,T)$ and $\varphi \in  C_c(V)$. 
  To prove \eqref{def_eq3}  we notice that, by exploiting the fact that $ \GM^s(x,y)= \GM^s(y,x)$ for all $x,y \in V$ and since  {$\varphi \in C_c(V)$ and $u \in \ell^1_{x_0,\GM^s}(V)$ for every $x_0 \in V$, by Fubini's theorem} 
  \begin{align*}
   &\sum_{x \in V}  (- \Delta)^{-s} u(t,x)  \varphi(x) \deg(x) =  \sum_{x \in V}      \sum_{y \in V} u (t,y) \, \GM^s(x,y)   \varphi(x) \deg(y) \deg(x)\\
   &= \sum_{y \in V}    u (t,y)  \sum_{x \in V}  \, \GM^s(y,x)   \varphi(x) \deg(x)\deg(y)=\sum_{y \in V}  (- \Delta)^{-s} \varphi(y) u(t,y) \deg(y) \notag
   \end{align*}
Hence, if we take $ \psi(t,x)=\psi(t) \varphi(x)$ in \eqref{def_eq2}, by using the above identity, we get \eqref{def_eq3}.
\end{rem}

 \subsection{{Mild solutions with initial data in $\ell^1$ are WDS}}
 
  { In this section we recall the notion of mild solutions to Problem~\eqref{NFDE} and their main properties. We then show that mild solutions with initial data in $\ell^1$ belong to the class of weak dual solutions. Existence, uniqueness, and comparison principles for mild solutions follow from the classical theory developed by Bénilan, Crandall, Pazy, and Pierre; see \cite{BCr,BCPbook,CP}.}

 \begin{defi}\label{defmild}
Let $ u_0 \in \ell^1(V)$ with $u_0\geq 0$. We say that a nonnegative measurable function $u$ is a mild solution to Problem~\eqref{NFDE} if $ u \in C^0([0,T];\ell^1(V)) $ and, for every $T>0$ and $ n \in  {\mathbb{N}_+} $ there exists a piecewise-constant function $u_n$ in $\ell^1(V)$ defined by 
$$
u_n(0)=u_0, \qquad	u_n(t) = u_k \quad \text{if }  t_{k-1} < t \le t_{k}   {\hbox{ and } 1 \le k \le n,}
	$$
	where $t_k = \frac{k}{n} \, T$, for any integer $ 0 \leq k \leq n$, $u_k $ is the solution to the following fractional elliptic equation for $ 0 \leq k \leq n-1$
	\begin{equation}\label{eq-approximate}
	h \, (-\Delta)^s \!\left(u_{k+1}\right)^m    +  u_{k+1} = u_{k}  \qquad \mbox{in} \ \hn \, ,
	\end{equation}
	$h =   \frac{T}{n}$, 
	and 
	$$
	\lim_{n\rightarrow \infty}\sup_{t\in [0,T]}\left\| u_n(t) -u(t)\right\|_{\ell^1(V)}  =0.
	$$
\end{defi}

\begin{prop}\label{BCPP-Thm}
	Let $u_0,v_0\in \ell^1(V)  $, with $ u_0,v_0 \ge 0 $. Then there exist unique nonnegative mild solutions $u,v\in C^0([0,+\infty); \ell^1(V))$ to Problem \eqref{NFDE} corresponding to the initial data $u_0,v_0$, respectively, such that
		\begin{equation}\label{comparison.L1}
	\sum_{x\in\hn} \left( u(t,x)-v(t,x)\right)_+ \dg(x) \le \sum_{x\in\hn} \left( u_0(x)-v_0(x)\right)_+ \dg(x) \qquad \mbox{for all } t\ge 0\,.
	\end{equation}
	In particular,
	\begin{equation*}
	\left\| u(t)-v(t) \right\|_{\ell^1(V)}\le \left\| u_0-v_0 \right\|_{\ell^1(V)} \qquad \forall t\ge 0 \, .
	\end{equation*}
	Moreover, the nonnegative mild solutions enjoy the time monotonicity property:
	\begin{equation}\label{mon-est.OLD}
	\mbox{the map} \quad t \mapsto t^{\frac{1}{m-1}} u(t,x) \quad \mbox{is nondecreasing for all $x \in V $.}
	\end{equation}
\end{prop}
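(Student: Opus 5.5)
We obtain Proposition \ref{BCPP-Thm} from the Crandall--Liggett and Bénilan--Crandall theory of nonlinear contraction semigroups on $\ell^1(V)$. The aim is to show that $A u := (-\Delta)^s(u^m)$, defined on nonnegative $u\in\ell^1(V)$ (or on all $u$, with $u^m$ read as $|u|^{m-1}u$), is m-accretive in $\ell^1(V)$. Mild solutions are then the Crandall--Liggett limits of the implicit Euler scheme \eqref{eq-approximate}. The steps are as follows.

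\emph{Step 1: range condition.} For every $h>0$ and $f\in\ell^1(V)$ we need a solution $v\in\ell^1(V)$ of $h(-\Delta)^s v^m+v=f$. Since $(-\Delta)^s$ is bounded on every $\ell^p(V)$, we work on finite exhaustions $V_k\uparrow V$. On each $V_k$ we solve the truncated problem, in which the operator is $(-\Delta)^s$ restricted to $V_k$ with homogeneous exterior data. This problem is equivalent to minimizing the strictly convex, coercive functional
\[
\tfrac{h}{2}\,\mathcal E_s(w,w)+\sum_{V_k}\Phi(w)\deg ,\qquad \Phi'(w)=w^{1/m}-f,
\]
with $w=v^m$ and $\mathcal E_s$ the Dirichlet form of $(-\Delta)^s$. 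Uniform $\ell^1$ bounds then allow passage to the limit $k\to\infty$ by monotonicity in $k$. For $f\ge0$ one has $v\ge0$.

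\emph{Step 2: $T$-accretivity.} We show that for $v_i$ solving the resolvent equations with data $f_i$,
\[
\sum_x (v_1-v_2)_+\deg \le \sum_x (f_1-f_2)_+\deg .
\]
Test the difference of the equations with $\mathrm{sign}_+(v_1^m-v_2^m)$; this equals $\mathrm{sign}_+(v_1-v_2)$ because $r\mapsto r^m$ is monotone. The key inequality is
\[
\sum_x (-\Delta)^s w(x)\,\mathrm{sign}_+(w(x))\deg(x)\ge0 \qquad\text{for } w\in\ell^1(V).
\]
This follows from the sub-Markovian property of $e^{-t(-\Delta)^s}$, because such semigroups are $\ell^1$ $T$-contractions. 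A caveat: $v^m$ need not be in $\ell^1(V)$ a priori. However, $v\in\ell^1(V)$ implies $v\in\ell^\infty(V)$ by \eqref{deg}, hence $v^m\le\|v\|_\infty^{m-1}v\in\ell^1(V)$. With accretivity and the range condition in hand, Crandall--Liggett yields existence, uniqueness and \eqref{comparison.L1}, and the $\ell^1$ contraction follows by symmetrizing. Nonnegativity is preserved.

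\emph{Step 3: time monotonicity \eqref{mon-est.OLD}.} We use homogeneity, following Bénilan--Crandall. If $u$ is the solution with datum $u_0$, then $u_\lambda(t,x)=\lambda u(\lambda^{m-1}t,x)$ is the solution with datum $\lambda u_0$. For $\lambda\ge1$, comparison (Step 2) gives $u_\lambda\ge u$. Now fix $t$, let $\tau=\lambda^{m-1}t$ and write $\lambda=(\tau/t)^{1/(m-1)}$. The inequality $u_\lambda\ge u$ becomes $\tau^{1/(m-1)}u(\tau)\ge t^{1/(m-1)}u(t)$ for $\tau\ge t$, which is \eqref{mon-est.OLD}. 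The scaling identity must be checked at the level of the Euler scheme: scaling $u_k\mapsto\lambda u_k$ corresponds to the step $h\mapsto\lambda^{1-m}h$, and the identity passes to the limit.

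The main obstacle is Step 1 without local finiteness and without bounded degree. We must ensure that the exhaustion limits solve the equation on all of $V$. This requires uniform $\ell^1$ and $\ell^\infty$ bounds on the approximations, together with pointwise convergence of $(-\Delta)^s v_k^m$. We intend to justify that convergence through dominated convergence in the kernel representation of $(-\Delta)^s$, using $\sum_y h_t(x,y)\deg(y)\le1$.
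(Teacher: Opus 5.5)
Your proposal is correct. It uses the same framework as the paper: Crandall--Liggett for an $m$-accretive, $T$-contractive operator on $\ell^1(V)$, followed by the homogeneity argument of Bénilan--Crandall. Where it differs is in how $m$-accretivity is obtained.

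\textbf{The paper's route.} It does not verify $m$-accretivity by hand. It observes that $A=(-\Delta)^s$ is linear, $m$-accretive and sub-Markovian, so it satisfies (A1)--(A2) of Crandall--Pierre. It then quotes their Propositions 1--2 for the $m$-accretivity of $A\varphi$, and Proposition 1(iv) for the order-preserving, $\ell^1$-nonexpansive resolvent. Time monotonicity is quoted from Bénilan--Crandall, Theorem 2.

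\textbf{Your route.} You prove each ingredient directly:
\begin{enumerate}
\item the range condition, via strictly convex minimization on finite sets $V_k$ and a monotone exhaustion;
\item $T$-accretivity, by testing with $\mathrm{sign}_+$;
\item time monotonicity, by reproducing the scaling identity $S(t)(\lambda u_0)=\lambda S(\lambda^{m-1}t)u_0$ combined with comparison. This is exactly the content of the cited Bénilan--Crandall theorem.
\end{enumerate}

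\textbf{What each buys.} The citation route is short and covers general monotone $\varphi$ and general sub-Markovian $A$. Yours is self-contained and shows exactly where the discrete structure enters, namely \eqref{deg}. It gives $\ell^1(V)\subset\ell^\infty(V)$, hence $u^m\in\ell^1(V)$, so $A_\varphi$ is defined on all of $\ell^1(V)$.

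\textbf{Points to tighten in Step 1.}
\begin{itemize}
\item The monotonicity $v_k\le v_{k+1}$ needs a comparison principle for the truncated problems, which you should state. For $v_k\ge 0$ extended by zero and $x\notin V_k$, one has $(-\Delta)^s v_k^m(x)\le 0$, because the off-diagonal part of $(-\Delta)^s$ is nonpositive. Hence $v_k$ is a subsolution on $V_{k+1}$, and the sign argument of your Step 2 gives the comparison.
\item The step you flag as the main obstacle is harmless. From $v_k\uparrow v$ and $\|v_k\|_{\ell^1(V)}\le\|f\|_{\ell^1(V)}$ you get $v_k\to v$ in $\ell^1(V)$. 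Then $|v_k^m-v^m|\le m\|v\|_\infty^{m-1}|v_k-v|$ gives $v_k^m\to v^m$ in $\ell^1(V)$. Boundedness of $(-\Delta)^s$ on $\ell^1(V)$ then yields pointwise convergence of $(-\Delta)^s v_k^m$, with no kernel-level dominated convergence needed.
\item Step 1 could even be skipped. The map $u\mapsto(-\Delta)^s(|u|^{m-1}u)$ is accretive and locally Lipschitz on the whole of $\ell^1(V)$. By Martin's classical theorem, a continuous, everywhere-defined accretive operator is $m$-accretive.
\end{itemize}
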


\begin{proof} 

As already remarked, the statement follows from the classical theory based on nonlinear semigroup. Indeed, we are dealing with an equation of the form $\partial_t u+ A \varphi(u)=0$, where the operator $A: dom(A) \subset \ell^1(\hn)\to \ell^1(\hn)$ is a densely defined linear $m$-accretive operator of sub-Markovian type.

 In our case, $\varphi(r)=r^m$ ($m>1$) is the standard example satisfying the assumptions in \cite{CP} while $A=(-\Delta)^s$ and satisfies assumptions (A1)--(A2) in \cite{CP} as well.
 

Then, by Proposition~1 and Proposition~2 in \cite{CP}, the nonlinear operator $A_\varphi$ associated with $A\varphi$ is $m$-accretive in $\ell^1(V)$. As a consequence, from the Crandall--Liggett theorem (see e.g, \cite[Theorem 10.16]{V}), $A_\varphi$ generates a nonlinear contraction semigroup $(S(t))_{t\ge 0}$ on $\ell^1(V)$, and for every nonnegative $u_0 \in \ell^1(V)$ there exists a unique mild solution
\[
u(t) = S(t)u_0 \in C([0,+\infty);\ell^1(V)).
\]

By Proposition~1(iv) in \cite{CP}, the resolvent $(I+\lambda A_\varphi)^{-1}$ is order-preserving and nonexpansive in $\ell^1(V)$. Passing to the semigroup, this implies that $S(t)$ is order-preserving and $T$-contractive. Hence, if $u$ and $v$ are the mild solutions corresponding to nonnegative initial data $u_0,v_0 \in \ell^1(V)$, we have \eqref{comparison.L1}. Finally, the time monotonicity property \eqref{mon-est.OLD} follows from Theorem~2 in \cite{BCr}. \end{proof}

  {The next lemma shows that, under suitable assumptions, $\left( -\Delta \right)^{-s}$ acts as a left inverse of $\left( -\Delta \right)^{s}$, a property that will be crucial in the proof of Proposition \ref{thm-weak-mild}.}

 \begin{lem}\label{lemma-inv}
\ Let  $1< p  < 2$ and let $n>2$ be such that \eqref{Nash} holds.   {Furthermore, assume that \(0 < s \le 1\) and \(s <  {n}\left( \frac{1}{p} - \frac{1}{2}\right)\).}   Then, the operator $ (-\Delta)^{-s} $ is continuous from $ \ell^p(V) $ to $\ell^{p'}(V)$, where $p'$ is the conjugate exponent of $p$. Moreover, if    {$u \in \ell^p(\hn)$}, the following left-inverse formula holds: 
\begin{equation}\label{left-infv}
\left( -\Delta \right)^{-s}\left[ \left( -\Delta \right)^{s} u \right] = u \qquad  \text{in $V$}.
\end{equation}

\end{lem}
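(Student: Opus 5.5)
The plan has two parts. First I establish the $\ell^p\to\ell^{p'}$ continuity of $(-\Delta)^{-s}$ through the subordination integral and heat semigroup bounds. Then I prove the left-inverse formula by a spectral/semigroup identity on a dense class, extended by continuity.

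\emph{Continuity.} I write
\[
(-\Delta)^{-s}u=\frac{1}{\Gamma(s)}\int_0^\infty t^{s-1}e^{t\Delta}u\,{\rm d}t
\]
and bound $\|e^{t\Delta}\|_{\ell^p\to\ell^{p'}}$. By Riesz--Thorin I interpolate between the $\ell^2\to\ell^2$ contraction and the $\ell^1\to\ell^\infty$ bound \eqref{stimaheat}. Let $\theta$ be defined by $1/p=(1-\theta)/2+\theta$, so that $\theta=2/p-1$ and $1/p'=(1-\theta)/2$. This gives
\[
\|e^{t\Delta}\|_{\ell^p\to\ell^{p'}}\le C\min\{1,t^{-\frac n2\theta}\}=C\min\{1,t^{-n(\frac1p-\frac12)}\}.
\]
Hence
\[
\int_0^\infty t^{s-1}\|e^{t\Delta}\|_{\ell^p\to\ell^{p'}}\,{\rm d}t\le C\Big(\int_0^1 t^{s-1}\,{\rm d}t+\int_1^\infty t^{s-1-n(\frac1p-\frac12)}\,{\rm d}t\Big)<\infty,
\]
because $s>0$ and $s<n(\frac1p-\frac12)$. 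I also need the Bochner integral to agree with the kernel formula $\sum_y u(y)\GM^s(x,y)\deg(y)$. I check this pointwise via Tonelli on $|u|$: the bound above shows $\sum_y |u(y)|\GM^s(x,y)\deg(y)<\infty$, and the sum may then be exchanged with the time integral.

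\emph{Left inverse.} For $s=1$ I use the Neumann series. Write $-\Delta=I-P$, so that $\GM=\sum_k P^k$ by \eqref{formulaGS}. Then
\[
(-\Delta)^{-1}(I-P)u=\lim_N\sum_{k=0}^{N}P^k(I-P)u=u-\lim_N P^{N+1}u.
\]
Since $u\in\ell^p$ with $p<\infty$, I need $P^{N}u\to0$ pointwise. This follows from $|P^Nu(x)|\le (P^N|u|^p(x))^{1/p}$ together with ultracontractivity-type decay of $p_N$. Alternatively, I can use the continuous-time version: $e^{t\Delta}u\to0$ pointwise as $t\to\infty$, which holds because $\|e^{t\Delta}\|_{\ell^p\to\ell^\infty}\le Ct^{-\frac n{2p}}$ by interpolating \eqref{stimaheat} with the $\ell^\infty$ contraction. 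In the continuous-time form, for $u\in\ell^p$ I compute
\[
\int_0^T e^{t\Delta}(-\Delta)u\,{\rm d}t=u-e^{T\Delta}u,
\]
and let $T\to\infty$.

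For $0<s<1$ I use the Balakrishnan formula
\[
(-\Delta)^s u=\frac{1}{\Gamma(-s)}\int_0^\infty (e^{t\Delta}u-u)\,t^{-1-s}\,{\rm d}t,
\]
which converges in $\ell^p$ since $-\Delta$ is bounded and $e^{t\Delta}$ is contractive. Note that $(-\Delta)^su\in\ell^p$, so the continuity part applies to it. Combining this with the subordination formula for $(-\Delta)^{-s}$ and using that the semigroup commutes with everything, I compute
\[
(-\Delta)^{-s}(-\Delta)^s u=\frac{1}{\Gamma(s)\Gamma(-s)}\int_0^\infty\!\!\int_0^\infty \tau^{s-1}t^{-1-s}\big(e^{(t+\tau)\Delta}u-e^{\tau\Delta}u\big)\,{\rm d}t\,{\rm d}\tau
\]
pointwise. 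The goal is to show this equals $u-\lim_{R\to\infty}e^{R\Delta}u=u$. The scalar identity behind it is the Beta-function computation that yields the functional-calculus identity $\lambda^{-s}\lambda^{s}=1$. To make it rigorous I rewrite the double integral via $\int_0^\infty t^{-1-s}(e^{t\Delta}-I)\,{\rm d}t=\int_0^\infty t^{-s}\int_0^1 e^{\theta t\Delta}\Delta\,{\rm d}\theta\,{\rm d}t$. Integrating in $\tau$ first then reduces the expression to $\int_0^\infty e^{r\Delta}(-\Delta)u\,{\rm d}r=u$, which is the $s=1$ identity.

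\emph{Main obstacle.} The hard step is justifying the Fubini exchanges. The double integral is not absolutely convergent in $\ell^{p'}$ near the region where both variables are large, so a direct exchange is not available. I would first prove \eqref{left-infv} for $u=(-\Delta)^{\varepsilon}w$-type regularized data. Failing that, I would truncate: cut the $\tau$-integral at $R$, apply Fubini on the truncated, absolutely convergent integral, and identify the error term as a combination of $e^{R\Delta}u$-type quantities. These tend to $0$ pointwise by the $\ell^p\to\ell^\infty$ decay $t^{-n/(2p)}$ combined with the condition $s<n(\frac1p-\frac12)$.
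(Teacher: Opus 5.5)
Your continuity argument is the same as the paper's: Riesz--Thorin between \eqref{stimaheat} and the $\ell^2$ contraction, then integrating $t^{s-1}\min\{1,t^{-n(\frac1p-\frac12)}\}$. Your $s=1$ argument, via $\int_0^T e^{t\Delta}(-\Delta)u\,{\rm d}t=u-e^{T\Delta}u$ and the pointwise decay $\|e^{T\Delta}\|_{\ell^p\to\ell^\infty}\le CT^{-n/(2p)}$, is sound.

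The gap is the case $0<s<1$, which is the actual content of \eqref{left-infv} for fractional $s$. You write down the formal Beta-function computation and call its justification the main obstacle. You then only name two fallback strategies and carry out neither.

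Your diagnosis of the obstacle is also wrong. The $(t,\tau)$ double integral \emph{is} absolutely convergent in $\ell^{p'}$. Write $e^{(t+\tau)\Delta}u-e^{\tau\Delta}u=e^{\tau\Delta}(e^{t\Delta}u-u)$ and use $\|e^{t\Delta}u-u\|_{\ell^p}\le\min\{2,2t\}\|u\|_{\ell^p}$. Its norm is then bounded by $C\|u\|_{\ell^p}\int_0^\infty\tau^{s-1}\min\{1,\tau^{-n(\frac1p-\frac12)}\}\,{\rm d}\tau\int_0^\infty t^{-1-s}\min\{2,2t\}\,{\rm d}t<\infty$. This is exactly where $s<n(\frac1p-\frac12)$ enters.

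What is \emph{not} justified is the step you actually rely on: rewriting through $\Delta e^{\theta t\Delta}$ and substituting $r=\tau+\theta t$. By Tonelli, the corresponding absolute triple integral equals $\frac{\Gamma(s)\Gamma(1-s)}{s}\int_0^\infty|e^{r\Delta}\Delta u(x)|\,{\rm d}r$. The only bound you have is $|e^{r\Delta}\Delta u(x)|\le C\min\{1,r^{-n/(2p)}\}\|\Delta u\|_{\ell^p}$, and it is not integrable at infinity when $n\le 2p$. For example, $n=3$, $p=1.9$, $s=0.05$ satisfies all the hypotheses. You would need an extra input such as the analytic-semigroup bound $\|\Delta e^{r\Delta}\|\lesssim r^{-1}$, which you neither state nor prove. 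The truncation fallback has the same problem: the error term can indeed be written as an average of $e^{\rho\Delta}u$ over $\rho\ge R$, but deriving that representation is precisely the missing computation.

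One way to close the gap in your framework, without differentiating the semigroup, is the following.
\begin{itemize}
\item Fix $x$ and set $f(r)=(e^{r\Delta}u)(x)$. Then $|f(r)|\le C\min\{1,r^{-n/(2p)}\}\|u\|_{\ell^p}$, with $n/(2p)>n(\frac1p-\frac12)>s$.
\item By the absolute convergence above, you may restrict to $\{t>\epsilon\}$ and let $\epsilon\to0$ at the end.
\item On that region the $f(t+\tau)$ and $f(\tau)$ terms converge separately; this uses $s<n/(2p)$.
\item Substitute $r=t+\tau$ and use $\int_0^{r-\epsilon}\tau^{s-1}(r-\tau)^{-1-s}\,{\rm d}\tau=\frac{\epsilon^{-s}}{s}\,r^{-1}(r-\epsilon)_+^s$. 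After rescaling $r=\epsilon\rho$, the truncated integral becomes $\int_0^\infty f(\epsilon\rho)\,\phi(\rho)\,{\rm d}\rho$, where $\phi(\rho)=\frac1s\bigl(\rho^{-1}(\rho-1)_+^s-\rho^{s-1}\bigr)\in L^1(0,\infty)$ and $\int_0^\infty\phi=\Gamma(s)\Gamma(-s)$.
\item Dominated convergence then gives $\Gamma(s)\Gamma(-s)\,u(x)$ for the double integral, which is \eqref{left-infv}.
\end{itemize}

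For comparison, the paper never composes the two integral representations. It passes to the subordinated semigroup $T^{(s)}_t=e^{-t(-\Delta)^s}$ and proves $T^{(s)}_tu\to0$ in $\ell^p$, using pointwise decay of the subordinated kernel, the maximal theorem and density. It then combines $(-\Delta)^{-s}=\int_0^\infty T^{(s)}_t\,{\rm d}t$ with an abstract result on potential operators of strongly stable semigroups. The left-inverse property thus comes from the generator identity rather than from a Beta-function computation.
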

{ \begin{proof}
 We start by noticing that the estimate \eqref{stimaheat} can be used to derive strong $(p,p')$ bounds
    for the heat semigroup. Interpolating between \eqref{stimaheat} and the
    trivial $(2,2)$-boundedness, we obtain 
    \[
        \|e^{t\Delta}\|_{\ell^p(V)\to \ell^{p'}(V)}
        \le
        C\,\min\{1,t^{- {n\left( \frac{1}{p} - \frac{1}{2}\right)}}\}     \qquad \text{for every} \   1\le p\le 2.
    \]
    Consequently, for every $s\in(0,1]$ the operator
    $(-\Delta)^{-s}:\ell^p(V)\to \ell^{p'}(V)$ is well defined  and bounded. Indeed,
    \begin{align*}
        \|(-\Delta)^{-s}u\|_{\ell^{p'}(V)}
        &\le
        \int_0^1 \|e^{t\Delta}u\|_{\ell^{p'}(V)}\,\frac{dt}{t^{1-s}}
        +
        \int_1^{\infty} \|e^{t\Delta}u\|_{\ell^{p'}(V)}\,\frac{dt}{t^{1-s}} \\
        &\le
        C\int_0^1 \,\|u\|_{\ell^p(V)}\,\frac{dt}{t^{1-s}}
        +
        C\int_1^{\infty}
        \|u\|_{\ell^p(V)}\,t^{- {n\left( \frac{1}{p} - \frac{1}{2}\right)}}
        \frac{dt}{t^{1-s}} \le C_{p,n,s}\|u\|_{\ell^p(V)},
    \end{align*}
   where we have exploited the fact that the integral is finite since $p<2$ and
    $s< n\left(\frac{1}{p}-\frac{1}{2}\right) $.
    
    As for the proof of \eqref{left-infv}, let $(T_t^{ {(s)}})_{t>0}=(e^{-t(-\Delta)^s})_{t>0}$ denote the subordinated semigroup generated by $ (-\Delta)^{s}$ which is submarkovian and hence contractive on $\ell^p(V)$
for every $1\le p\le\infty$. In order to prove \eqref{left-infv} we first show that for every $1< p<\infty$ and $s \in (0,1]$ 
\begin{equation}\label{T_tzero}
\|T_t^{ {(s)}} u\|_{\ell^p(V)} \xrightarrow[t\to\infty]{} 0
\qquad \text{for all } u\in \ell^p(V).
\end{equation}
We begin showing \eqref{T_tzero} for $s=1$  {and in this case we simply write $T_t$ in place of $T_t^{(1)}$}. Assume that $u \in C_c(V)$. Observe that 
\begin{align*}
   |T_tu(x)| \le \sum_{y \in \mathrm{supp} \, u} h_t(x,y)|u(y)| \mathrm{deg}(y) \le \|u\|_\infty\sum_{y \in \mathrm{supp}\, u} h_t(x,y) \mathrm{deg}(y),
\end{align*} and the right hand side tends to zero as $t \to \infty$ by \eqref{ultracontr1} because the sum has finite summands. Set $Mu(\cdot)=\sup_{t>0} |T_t u(\cdot)|$, and recall that $M$ is bounded on $\ell^p(V)$ (see \cite[Chapter 3]{Stein}). Observe that
\begin{align*}
    \sum_{x \in V} |T_tu(x)|^p \mathrm{deg}(x) \le  \sum_{x \in V} |Mu(x)|^p \mathrm{deg}(x) \le C \sum_{x \in V} |u(x)|^p \, \mathrm{deg}(x)<\infty.
\end{align*}
Then, by dominated convergence, \begin{align*}
\lim_{t \to \infty}   \sum_{x \in V} |T_tu(x)|^p \mathrm{deg}(x) =   \sum_{x \in V} \lim_{t \to \infty} |T_tu(x)|^p \mathrm{deg}(x) =0,
\end{align*}
as desired.
For a general $u \in \ell^p(V)$, observe that for every $\varepsilon>0$ there is a $u_{{n}} \in \ell^p(V)\cap C_c(V)$ such that $\|u_n-u\|_{\ell^p(V)}<\varepsilon$, so that
\begin{align*}
    \| T_t u\|_{\ell^p(V)} \le \|T_t u_n\|_{\ell^p(V)}+\|T_t (u-u_n)\|_{\ell^p(V)} \le \|T_tu_n\|_{\ell^p(V)}+\varepsilon,
\end{align*} and the right hand side tends to $\varepsilon$ as $t\to \infty$, proving \eqref{T_tzero} for $s=1$.\\
 We now prove  \eqref{T_tzero}  for $s\in(0,1)$. Let $K_{t,s}$ denote the kernel of $T_t^{ {(s)}}$. It can be written  by subdordination (see \cite[Chapter IX]{Yosida80}) as
\begin{align*}
    K_{t,s}(x,y)=\int_0^{\infty} h_\tau(x,y) \eta_{t,s}(\tau) \, \mathrm{d}\tau,
\end{align*} where $\eta_{t,s}$ satisfies
\begin{align*}
    \eta_{t,s}(\tau) \ge0, \qquad \int_0^\infty \eta_{t,s}(\tau)\, \mathrm{d}\tau=1 \quad \forall t>0, \qquad 
    \int_0^\infty e^{-\lambda \tau}\eta_{t,s}(\tau) \, \mathrm{d\tau}=e^{-t\lambda ^s} \quad   \forall \lambda, t>0.
\end{align*} We claim that 
\begin{align*}
    \lim_{t \to \infty} K_{t,s}(x,y)=0 \qquad\forall x,y \in V.
\end{align*} This would allow us the repeat the proof used for $s=1$. \\  Fix $R>0$ big enough. Then,
\begin{align*}
    K_{t,s}(x,y)&=\int_0^R h_\tau(x,y)\eta_{t,s}(\tau) \, \mathrm{d\tau}+\int_R^\infty  h_\tau(x,y) \eta_{t,s}(\tau) \, \mathrm{d\tau} \\
    &\le    {\max_{\tau \in [0, R]}h_\tau(x,y)} \int_0^R \eta_{t,s}(\tau) \, \mathrm{d\tau}+\sup_{\tau \ge R} h_\tau(x,y) \int_R^\infty \eta_{t,s}(\tau) \, \mathrm{d\tau}.
\end{align*} 
Moreover,
\begin{align*}
    \int_0^R \eta_{t,s}(\tau) \, \mathrm{d\tau} \le e^{\lambda R} \int_0^R e^{-\lambda \tau} \eta_{t,s}(\tau) \, \mathrm{d\tau} \le e^{\lambda R}  e^{-t\lambda^s},
\end{align*}so that $$\lim_{t \to \infty}\int_0^R \eta_{t,s}(\tau) \, \mathrm{d\tau}=0.$$
 On the other hand, by \((2.5)\), for every $\varepsilon>0$ there exists $R>0$ such that  $|h_\tau(x,y)|<\varepsilon$ for every $\tau>R$.
Furthermore,
\[
\int_R^\infty \eta_{t,s}(\tau)\,\mathrm{d}\tau
\le
\int_0^\infty \eta_{t,s}(\tau)\,\mathrm{d}\tau
=1.
\]
Hence,
\[
\limsup_{t\to\infty} K_{t,s}(x,y)\le \varepsilon.
\]
Since \(\varepsilon>0\) is arbitrary, this proves the claim for every $s\in (0,1]$.
 
  { Coming back to the proof of \eqref{left-infv}, the limit proved in \eqref{T_tzero} allows to apply  \cite[Proposition 6.2.12]{J-3} and conclude that
  $$\lim_{h \to 0^+}  \int_0^{+\infty} T_t^{ {(s)}} \left[  \frac{u - T_h^{ {(s)}} u}{h}\right]\,  \mathrm{d}t=u\,.$$
 The above limit, suitable combined with the following known identities:
$$
\lim_{h \to 0^+} \frac{u - T_h^{ {(s)}} u}{h} = \left( -\Delta \right)^s u \quad \text{in } \ell^p(\hn) \qquad \text{and} \qquad
(-\Delta)^{-s} = \int_0^{+\infty} T_t^{ {(s)}} \, \mathrm{d}t
$$
 {and the continuity of $(-\Delta)^{-s}$} yield the  {desired conclusion}.}
   \end{proof}

  {The next result provides the link between the classical semigroup approach and the weak dual formulation by showing that mild solutions with $\ell^1(\hn)$ initial data are WDS.}

\begin{prop}\label{thm-weak-mild}
Let $u$ be the mild solution to \eqref{NFDE} corresponding to any nonnegative initial datum $u_0 \in \ell^1(\hn)$. Then, $u$ is a WDS in the sense of Definition~\ref{defi_WDS}.
\end{prop}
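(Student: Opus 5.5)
Since $u_0\in\ell^1(V)$, the mild solution $u$ produced by Proposition~\ref{BCPP-Thm} is obtained as the $\ell^1$-limit of the piecewise-constant approximations $u_n$ of Definition~\ref{defmild}. The plan is to check the weak dual identity \eqref{def_eq} at the discrete level, for the implicit Euler scheme \eqref{eq-approximate}, and then pass to the limit $n\to\infty$.

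\textbf{Regularity requirements.} These are the easy part.
\begin{itemize}
\item Since $\GM^s$ is uniformly bounded by $C_{s,n}$, we have $\|v\|_{\ell^1_{x_0,\GM^s}(V)}\le C_{s,n}\|v\|_{\ell^1(V)}$ for every $x_0\in V$. Hence $u\in C^0([0,T];\ell^1(V))$ gives $u\in C^0([0,T];\ell^1_{x_0,\GM^s}(V))$.
\item By \eqref{deg}, $\|u(t)\|_\infty\le d_0^{-1}\|u(t)\|_{\ell^1(V)}\le d_0^{-1}\|u_0\|_{\ell^1(V)}$, using $\ell^1$-contraction and the fact that $0$ is a solution. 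Thus $u^m$ is bounded on $[0,T]\times V$. It is also continuous in $t$ for each $x$ (pointwise convergence follows from $\ell^1$ convergence), so $u^m\in L^1((0,T);C(V))$.
\item The initial condition holds by construction.
\end{itemize}

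\textbf{Discrete identity.} Apply $(-\Delta)^{-s}$ to \eqref{eq-approximate}. Each $u_k$ lies in $\ell^1(V)\cap\ell^\infty(V)$, so $u_k^m\in\ell^p(V)$ for every $p\ge1$. We choose $p\in(1,2)$ close enough to $1$ that $s<n(\frac1p-\frac12)$; this is possible because $n>2\ge 2s$. Lemma~\ref{lemma-inv} then gives $(-\Delta)^{-s}[(-\Delta)^s u_{k+1}^m]=u_{k+1}^m$, and therefore
\[
\frac{(-\Delta)^{-s}u_{k+1}(x)-(-\Delta)^{-s}u_k(x)}{h}=-u_{k+1}^m(x)\qquad\forall x\in V .
\]
The expression $(-\Delta)^{-s}u_k(x)$ is finite since $\GM^s$ is bounded and $u_k\in\ell^1(V)$. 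Multiply by $\psi(t_k)$ with $\psi\in C^1_c(0,T)$, sum over $k$, and perform a discrete summation by parts; the boundary terms vanish for $n$ large because $\psi$ has compact support. This yields
\[
\sum_k h\,\frac{\psi(t_{k})-\psi(t_{k-1})}{h}\,(-\Delta)^{-s}u_k(x)-\sum_k h\,\psi(t_{k})\,u_{k}^m(x)=0 ,
\]
up to an index shift. This is a Riemann-sum version of \eqref{def_eq} for $u_n$.

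\textbf{Passage to the limit.} Pointwise in $x$, we have
\[
|(-\Delta)^{-s}u_n(t,x)-(-\Delta)^{-s}u(t,x)|\le C_{s,n}\|u_n(t)-u(t)\|_{\ell^1(V)}\to0
\]
uniformly in $t$. Also $|u_n^m-u^m|\le m M^{m-1}|u_n-u|$ with $M=d_0^{-1}\|u_0\|_{\ell^1(V)}$, so $u_n^m\to u^m$ uniformly on $[0,T]$ at each $x$. The difference quotients of $\psi$ converge uniformly to $\psi'$. Together with the uniform bounds, this lets us pass to the limit in both sums and obtain \eqref{def_eq} for $u$.

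\textbf{Main obstacle.} The delicate step is the legitimacy of the left-inverse formula. Lemma~\ref{lemma-inv} requires $\ell^p$ with $p>1$ and the constraint $s<n(1/p-1/2)$, and we must make sure that $u_{k+1}^m$ lies in that $\ell^p(V)$. This is exactly where we use $\ell^1\subset\ell^\infty$, via \eqref{deg}, and the freedom to choose $p$ near $1$.
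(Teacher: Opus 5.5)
Your proposal is correct and follows essentially the same route as the paper. You apply $(-\Delta)^{-s}$ to the implicit Euler scheme \eqref{eq-approximate} via the left-inverse formula of Lemma~\ref{lemma-inv}, sum by parts against $\psi(t_k)$, and pass to the limit using the uniform $\ell^1$ convergence $u_n\to u$ together with the boundedness of $\GM^s$. Your explicit choice of $p\in(1,2)$ with $s<n(\frac1p-\frac12)$, and the uniform $\ell^\infty$ bound from \eqref{deg} used to control $u_n^m-u^m$, make precise two points the paper leaves implicit.
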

\begin{proof}
  {As already recalled, mild solutions are obtained as limits of an implicit time discretization scheme (see Definition~\ref{defmild}). 
Fix \(T>0\) and \(n\in\mathbb{N}\), and let \(u_n\) be the corresponding piecewise-constant interpolation defined in Definition~\ref{defmild}. 
Then the mild solution \(u \in C^0([0,T];\ell^1(V))\) is obtained as the uniform limit of \(u_n\) in \(\ell^1(V)\) (see \cite[Theorem 10.16]{V}).
By construction, the sequence \(\{u_k\}_k\) satisfies \eqref{eq-approximate}, and, by induction, \(u_k \in \ell^1(V)\) together with the nonexpansivity estimate
\[
\| u_{k+1} \|_{\ell^1(V)} \le \| u_k \|_{\ell^1(V)} \le \cdots \le \| u_0 \|_{\ell^1(V)}.
\]
Consequently,
\[
\| u_n(t) \|_{\ell^1(V)} \le \| u_0 \|_{\ell^1(V)} \qquad \forall t \in [0,T].
\]}

Let $\psi \in C^1_c( (0, T))$, we set $ \psi_k = \psi(t_k) $ and we fix $x\in V$. 
Observe that since $u_{k+1} \in \ell^1(V)$, we have that $u_{k+1}^m \in \ell^1(V) \subset \ell^p(V)$ for every $p \in [1,\infty]$. Moreover, by Lemma \ref{lemma-inv}, we obtain
\[
(-\Delta)^{-s} (-\Delta)^s u_{k+1}^m = u_{k+1}^m \qquad \text{in }V.
\]
In view of the above, by applying $(-\Delta)^{-s}$ to both members of \eqref{eq-approximate}, multiplying by $\psi_k$ and summing all terms over $ k=0,\ldots,n-1 $, we get  

$$	\sum_{k =0}^{n-1}  \left[ (-\Delta)^{-s} u_{k+1}(x) - (- \Delta)^{-s} u_k(x) \right] \psi_{k} \, 
	= - h \, \sum_{k=0}^{n-1}  \left(u_{k+1}(x)\right)^m \psi_{k}  \, ,
$$
	which can be rewritten as
$$
 [\psi_{n-1} \, (-\Delta)^{-s} u_n(x) \, - \, \psi_0 \, (-\Delta)^{-s} u_0(x)] -  I_1 = {-}I_2 
$$
where 
$$I_1=\sum_{k =1}^{n-1}  \left(\psi_k - \psi_{k-1}\right) (-\Delta)^{-s} u_k(x) \quad \text{and}  \quad I_2=h \sum_{k =0}^{n-1}  (u_{k+1}(x))^m \, \psi_k.$$
	As concerns $ I_1 $, we have 
	\begin{align*}
	I_1 & =  \sum_{k =1}^{n-1}  h \,  \frac{\psi_k - \psi_{k-1}}{h} \, (-\Delta)^{-s} u_k (x)\,  \label{e012}  \\
	& = \sum_{k =1}^{n-1}  h \,  \partial_t \psi ({t}_k) \, (-\Delta)^{-s} \!\left( u_k - \tilde{u}_k\right)  (x)
	+  \sum_{k =1}^{n-1}  h \,  \partial_t \psi ({t}_k) \, (-\Delta)^{-s}  \tilde{u}_k(x) \, + \rho_n (x) \, ,\nonumber
	\end{align*}
	where we set  $  \tilde{u}_k = u(t_k ,\cdot)$, and
	$$
	\rho_n(x) =  \sum_{k =1}^{n-1}  h \,  \left[ \frac{\psi_k - \psi_{k-1}}{h} - \partial_t \psi ({t}_k) \right] (-\Delta)^{-s} u_k(x) \, \, .
	$$
	Since $ \psi $ belongs to $ C^1_c( (0, T))$, $ \{ u_k \} $ is uniformly bounded in $ \ell^1(V)$  {and by the $(1,\infty)$-boundedness of $(-\Delta)^s$}, we have that
	$$
	|(-\Delta)^{-s} u_k(x)|\leq \|(-\Delta)^{-s} u_k\|_{{\ell^\infty(V)}}\le  {C} \|u_k\|_{ {\ell^1(V)}}\leq  {C}\|u_0\|_{ {\ell^1(V)}},
	$$
	hence it is readily seen that $ \rho_n(x) \to 0 $ as $ n \to \infty $. Let us then focus on the other summands of $I_1$. As for the first one, 
	 {using again the $(1,\infty)$-boundedness} of $(-\Delta)^{-s}$ we have 
	\begin{align*}
 &\left| \sum_{k =1}^{n-1}  h \,  \partial_t \psi ({t}_k,\cdot) \, (-\Delta)^{-s} \!\left( u_k - \tilde{u}_k\right) (x)  \right|
\leq \|\partial_t \psi \|_{C[0,T]}\,  \sum_{k =1}^{n-1}  h  \|  (-\Delta)^{-s} \left(  u_k - \tilde{u}_k \right)\|_{\ell^{{\infty}}(V)}\\
	& \leq  \, C  T \sup_{t \in [0,T]} \left\| u_n(t) - u(t) \right\|_{\ell^{{1}}(V)}  
	   \underset{n \to \infty}{\longrightarrow} 0 \, .
	\end{align*}
	On the other hand, the second summand in $I_1$ is a Riemann sum so that, using the continuity of $ \partial_t \psi $ and of the function $u$ with respect to $t$ we get
	$$
	\sum_{k =1}^{n-1}  h \, \partial_t \psi ({t}_k) \, (-\Delta)^{-s}  \tilde{u}_k (x)
	\underset{n \to \infty}{\longrightarrow}  \int_0^T  \partial_t \psi (t)\, (-\Delta)^{-s} u(t,x) \,  \, {\rm d}t \, .
	$$
     Hence, by combining the above results, we deduce that
	\begin{equation}\label{Riemann-1}
	I_1 \underset{n \to \infty}{\longrightarrow}  \int_0^T  \partial_t \psi(t) \, (-\Delta)^{-s} u (t,x) \, {\rm d}t \, .
	\end{equation}
	As far as $I_2$ is concerned, we have that 
	\begin{equation*}
	I_2 =  h\sum_{k =0}^{n-1}[u_{k+1}^m(x)  - \tilde u_k^m(x)] \, \psi_k +h\sum_{k =0}^{n-1}\tilde u_k^m(x)   \, \psi_k\, .
	\end{equation*}
	 For the first summand, exploiting the fact that also $ \{ u^m_n \} $ converges to $ u^m $ uniformly in $  \ell^1(V)$ and $ u \in C^0([0,T];\ell^1(V)) $, we have that
	\begin{align*}
 &\left| h\sum_{k =0}^{n-1}[u_{k+1}^m(x)  - \tilde u_k^m(x)] \, \psi_k \right| \leq \frac{h}{d_0} \| \psi \|_{C[0,T]} \sum_{k =0}^{n-1}\|u_{k+1}^m  - \tilde u_k^m\|_{\ell^1(V)}\\
 & \leq  \frac{h}{d_0} \| \psi \|_{C[0,T]} \left(\sum_{k =0}^{n-1} [\|u_{n}^m(t_{k+1}, \cdot)  -  u^m(t_{k+1}, \cdot)\|_{\ell^1(V)}+\| u^m(t_{k+1}, \cdot)  - u^m(t_{k}, \cdot)\|_{\ell^1(V)} ]\right)\\
 & \leq   \frac{T}{d_0} \| \psi \|_{C[0,T]}\left( \sup_{t \in [0,T]} \left\| u_n^m(t) - u^m(t) \right\|_{\ell^1(V)}+  \sup_{k=0,...,n-1} \left\| u^m(t_{k+1}, \cdot)  - u^m(t_{k}, \cdot) \right\|_{\ell^1(V)} \right) \underset{n \to \infty}{\longrightarrow} 0 \, .
 \end{align*}
	As for the second summand in $I_2$, it is a Riemann sum so that, using the continuity of $ \psi $ and of the function $u$ with respect to $t$ we get
	$$
	h\sum_{k =0}^{n-1}\tilde u_k^m(x)   \, \psi_k
	\underset{n \to \infty}{\longrightarrow}  \int_0^T  \psi (t)\,  u^m(t,x) \,  \, {\rm d}t \, .
	$$
	Therefore,
	\begin{equation}\label{Riemann-2}
	I_2 \underset{n \to \infty}{\longrightarrow}  \int_0^T u^m \, \psi \, {\rm d}t \, .
	\end{equation}
	Since $\psi$ is compactly supported in time, $ \psi_0 =0  $ and $ \psi_{n-1}=0 $ for sufficiently large $n$, whence
	\begin{equation}\label{Riemann-3}
 \, [\psi_{n-1} \, (-\Delta)^{-s} u_n  -  \psi_0 \, (-\Delta)^{-s} u_0] \,   \underset{n \to \infty}{\longrightarrow}   0 \, .
	\end{equation}
	As a consequence of \eqref{Riemann-1}, \eqref{Riemann-2} and \eqref{Riemann-3}, we finally obtain
	$$
	 \int_0^T  \partial_t \psi(t) \, (-\Delta)^{-s} u(t,x)  \, {\rm d}t
	- \int_0^T   u^m(t,x) \, \psi(t) \, {\rm d}t = 0\, .
	$$
Given the arbitrariness of $ T $ and $\psi$, the above identity shows that $u$ is indeed a WDS, recalling moreover that since $\ell^1(\hn)$ is included (with continuity) in  $ \ell^1_{x_0,\GM^s}(\hn) $ for every $x_0 \in  V$,   $u$ also belongs to $ C^0([0,+\infty);\ell^1_{x_0,\GM^s}(\hn)) $.
\end{proof}

\subsection{  {Weighted $\ell^1$ estimates for nonnegative WDS}} 
In this subsection we establish weighted $\ell^1$ a priori estimates for nonnegative weak dual solutions. These estimates will play a crucial role in the existence theory developed in the next subsection.

\begin{prop}\label{crucial}
Let $u$ be a WDS to \eqref{NFDE} corresponding to a nonnegative initial datum    $u_0 \in \ell^1_{\GM^s}(V)$. Then, we have
\begin{equation}\label{estimate-1}
 \sum_{x \in V} u(t, x)  \, \GM^s (x, x_0) \deg(x)\, \leq \sum_{x \in V} u_0(x) \,  \GM^s(x, x_0) \deg(x)\,  \qquad \text{for all $t \ge 0 $ and all $ x_0 \in V $} \, ,
\end{equation}
namely $$\left\| u(t)  \right\|_{\ell^1_{x_0,\GM^s}(V)} \leq  \left\| u_0  \right\|_{\ell^1_{x_0,\GM^s}(V)}  \qquad \text{for all $t \ge 0$ and all $ x_0  \in V $} \, ,$$
and
\begin{align}\label{main-estimate-1}
 \left( \frac{t_0}{t_1} \right)^{\frac{m}{m-1}} (t_1 - t_0)\, u^m (t_0, x_0)
 &\leq
  \sum_{x \in V} \left[ u(t_0, x)-u(t_1, x) \right] \GM^s(x, x_0)\deg(x) \, \leq (m-1) \, \frac{t^{\frac{m}{m-1}}}{t_0^{\frac{1}{m-1}}} \, u^m(t, x_0)
\end{align}
for all  $0<t_0\leq t_1\leq t$ and all $ x_0  \in V $. 
\end{prop}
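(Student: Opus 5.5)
The plan is to read the weak dual formulation \eqref{def_eq} at the fixed vertex $x=x_0$ as an ODE for the scalar function
\[
\Phi(t):=(-\Delta)^{-s}u(t,x_0)=\sum_{x\in V}u(t,x)\,\GM^s(x,x_0)\deg(x),
\]
using the symmetry $\GM^s(x,x_0)=\GM^s(x_0,x)$. By Remark~\ref{Rem_Delta_s_cont}, $\Phi$ is continuous on $[0,T]$, because $u\in C^0([0,T];\ell^1_{x_0,\GM^s}(V))$. Identity \eqref{def_eq} says that, in the sense of distributions on $(0,T)$,
\[
\Phi'(t)=-u^m(t,x_0).
\]
Since $u^m(\cdot,x_0)\in L^1(0,T)$ (from $u^m\in L^1((0,T);C(V))$), $\Phi$ is absolutely continuous on compact subintervals of $(0,T)$. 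This gives
\begin{equation*}
\Phi(t_0)-\Phi(t_1)=\int_{t_0}^{t_1}u^m(\tau,x_0)\,{\rm d}\tau\qquad\text{for all } 0<t_0\le t_1< T,
\end{equation*}
and $T$ is arbitrary. The nonnegativity of $u$ then makes $\Phi$ nonincreasing on $(0,\infty)$.

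Estimate \eqref{estimate-1} follows by letting $t_0\to0^+$. Continuity of $\Phi$ at $t=0$, again from Remark~\ref{Rem_Delta_s_cont}, together with $u(0)=u_0$ gives $\Phi(t)\le\Phi(0)$, which is exactly \eqref{estimate-1} and hence the stated norm bound. For the two-sided bound \eqref{main-estimate-1}, the middle term equals $\int_{t_0}^{t_1}u^m(\tau,x_0)\,{\rm d}\tau$, so I would estimate this integral from both sides using the monotonicity of $\tau\mapsto\tau^{\frac{m}{m-1}}u^m(\tau,x_0)$. This is equivalent to \eqref{mon-est.OLD}.

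For the lower bound, for $\tau\in[t_0,t_1]$ monotonicity gives
\[
u^m(\tau,x_0)\ge (t_0/\tau)^{\frac m{m-1}}u^m(t_0,x_0)\ge (t_0/t_1)^{\frac m{m-1}}u^m(t_0,x_0),
\]
and integrating over an interval of length $t_1-t_0$ yields the left inequality. For the upper bound, when $\tau\le t_1\le t$ we have
\[
u^m(\tau,x_0)\le (t/\tau)^{\frac m{m-1}}u^m(t,x_0),
\]
and
\[
\int_{t_0}^{t_1}\tau^{-\frac m{m-1}}\,{\rm d}\tau\le\int_{t_0}^\infty\tau^{-\frac m{m-1}}\,{\rm d}\tau=(m-1)\,t_0^{-\frac1{m-1}}.
\]
Together these give the right inequality.

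The main obstacle is justifying the time monotonicity \eqref{mon-est.OLD} for a general WDS. Proposition~\ref{BCPP-Thm} only provides it for mild solutions with $\ell^1$ data, and those are WDS by Proposition~\ref{thm-weak-mild}. I would first establish \eqref{main-estimate-1} for such solutions. I would then extend it to the WDS considered later, which are obtained as limits of these, by noting that the pointwise property $t^{\frac1{m-1}}u(t,x)$ nondecreasing is preserved under pointwise convergence. Failing a direct argument, I would try to derive monotonicity from the scaling invariance $u_\lambda(t,x)=\lambda^{\frac1{m-1}}u(\lambda t,x)$ combined with the comparison principle. Comparison is, however, only available within the class where uniqueness holds.
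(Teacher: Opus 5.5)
Your argument is essentially the paper's. The paper tests \eqref{def_eq3} with $\psi_n\to\chi_{[t_0,t_1]}$ and $\varphi=\delta_{x_0}/\deg(x_0)$, which amounts to your identity $\Phi(t_0)-\Phi(t_1)=\int_{t_0}^{t_1}u^m(\tau,x_0)\,{\rm d}\tau$ read off from \eqref{def_eq} at $x_0$; it then lets $t_0\to0$ to get \eqref{estimate-1} and bounds this integral from both sides via \eqref{mon-est.OLD}. The caveat you flag is real but shared: the paper also applies \eqref{mon-est.OLD} to an arbitrary WDS without further justification, although it is only established for mild solutions (and, as you note, it passes to the monotone limits of Theorem~\ref{existenceWDS}). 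So your version is exactly as complete as the paper's proof.
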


\begin{proof}

The idea of the proof is the same of \cite[Proposition~4.2]{BV2} and \cite[Proposition 3.3]{BBGG} in the continuous setting, where bounded Euclidean sets and the hyperbolic space are treated, respectively. 
 {Fix $t>0$ and let $0<t_0\leq t_1\leq t$.} 
  {Choosing in \eqref{def_eq3}  as test functions a sequence \( \{\psi_n\}_n \subset C_c^1((0,+\infty)) \) 
such that \( 0 \le \psi_n \le 1 \), 
\( \mathrm{supp}\,\psi_n \subset (t_0 - \tfrac{1}{n},\, t_1 + \tfrac{1}{n}) \subset (0,t) \) for $n$ large enough, 
\( \psi_n \to \chi_{[t_0,t_1]} \) a.e. in \( (0,+\infty) \), and 
\( \partial_t \psi_n \to \delta_{t_0} - \delta_{t_1} \) in the sense of distributions, 
as \( n \to \infty \), the following identity holds:}
\begin{align}\label{limit-1_new}
&\sum_{x \in V} u(t_0, x) \, (- \Delta)^{-s} \varphi(x)\deg(x) \,
- \sum_{x \in V} u(t_1, x) \, (- \Delta)^{-s} \varphi(x)\deg(x)\\
& =  \int_{t_0}^{t_1} \sum_{x \in V}  u^{m}(\tau, x) \, \varphi(x)\deg(x) \, \, {\rm d}\tau \ge 0 \notag
\end{align}
for every nonnegative $\varphi \in C_c(V)$. Finally, by taking $\varphi(x)= \frac{\delta_{x_0}(x)}{\deg(x_0)}$, $t_1=t$ and  by letting $t_0\to 0$, we get \eqref{estimate-1}.
 \\
The proof of \eqref{main-estimate-1} follows by \eqref{limit-1_new} plus the monotonicity property \eqref{mon-est.OLD}. More precisely, we consider the sequence $\psi_{n} \in C_c^{\infty}(0, + \infty)$ defined above. Then, for all nonnegative $\varphi \in C_c(V)$, by using that that the map  $t \mapsto t^{\frac{m}{m-1}} u^m$ is nondecreasing, we get
\begin{align*}
& \int_0^{\infty} \psi_{n}(\tau) \sum_{x \in V}  \, u^m(\tau, x) \, \varphi(x) \, \deg(x) \, {\rm d}\tau\\
&\leq \int_{t_0 -\frac{1}{n}}^{t_1 + \frac{1}{n}}  \, \left( \frac{t}{\tau} \right)^{\frac{m}{m-1}}
\sum_{x \in V}  u^m (t, x) \, \varphi(x) \, \deg(x) \, {\rm d}\tau \\
& \leq  \,\left( \int_{t_0 -\frac{1}{n}}^{t_1 + \frac{1}{n}} \,
\left( \frac{t}{\tau} \right)^{\frac{m}{m-1}} \, {\rm d}\tau \right) \,\left( \sum_{x \in V}  u^m(t, x) \, \varphi(x) \,\deg(x) \right) \\
& \leq (m-1) \frac{1}{(t_0 - \frac{1}{n})^{\frac{1}{m-1}}} \, t^{\frac{m}{m-1}} \,
\sum_{x \in V}  \, u^m(t, x) \, \varphi(x) \, \deg(x).
\end{align*}
Now, letting $n \rightarrow \infty$, we get
$$
\int_{t_0}^{t_1} \sum_{x \in V} \, u^m(\tau, x) \, \varphi(x) \, \deg(x) \, {\rm d}\tau
 \leq  \frac{(m-1)}{t_0^{\frac{1}{m-1}}} \, t^{\frac{m}{m-1}} \, \sum_{x \in V} u^m(t, x) \, \varphi(x) \, \deg(x),
$$
for all $t_0 \leq t_1 \leq t.$ Finally, combining the above inequality with \eqref{limit-1_new} and testing with $\varphi(x)= \frac{\delta_{x_0}(x)}{\deg(x_0)}$ we obtain the upper bound estimate of  \eqref{main-estimate-1}. The lower bound follows similarly.\end{proof}

Minor changes in the proof of Proposition \ref{crucial} also give the following.

\begin{prop}\label{monotonePhi}
	If $ u,v $ are two \emph{ordered} WDS to Problem \eqref{NFDE} corresponding to nonnegative initial data   {$u_0,v_0 \in \ell^1_{\GM^s}$}, respectively, it holds
\begin{align}\label{newmonotonicity2-bis}
\left\| u(t) - v(t) \right\|_{\ell^1_{x_0,\GM^s}(V)} \leq  \left\| u_0 - v_0 \right\|_{\ell^1_{x_0,\GM^s}(V)}  \qquad \text{for all $t \ge 0$ and all $ x_0  \in \hn $} \, .
\end{align}

\end{prop}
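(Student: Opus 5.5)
Without loss of generality we may assume $u\ge v$ on $[0,\infty)\times V$; the case $v\ge u$ is symmetric. Then $u-v\ge 0$, so $\|u(t)-v(t)\|_{\ell^1_{x_0,\GM^s}(V)}=\sum_{x}(u(t,x)-v(t,x))\GM^s(x,x_0)\deg(x)$, and the same holds at $t=0$ because $u_0\ge v_0$ (take $t=0$ in the ordering). The aim is to repeat the proof of \eqref{estimate-1} in Proposition~\ref{crucial} for the difference $w=u-v$. Since \eqref{def_eq} and \eqref{def_eq3} are linear in the time-derivative term, subtracting the two weak dual formulations gives, for every $\psi\in C^1_c(0,T)$ and $\varphi\in C_c(V)$,
\begin{equation*}
\int_0^T\partial_t\psi(t)\sum_{x\in V}(-\Delta)^{-s}\varphi(x)\,w(t,x)\deg(x)\,{\rm d}t=\int_0^T\psi(t)\sum_{x\in V}\big(u^m-v^m\big)(t,x)\,\varphi(x)\deg(x)\,{\rm d}t .
\end{equation*}
The Fubini step used to derive \eqref{def_eq3} carries over unchanged, because both $u$ and $v$ lie in $\ell^1_{x_0,\GM^s}(V)$ for every $x_0$.

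Next I take the same approximating sequence $\psi_n\to\chi_{[t_0,t_1]}$ as in the proof of Proposition~\ref{crucial}. Passing to the limit in the left-hand side uses the continuity of $t\mapsto(-\Delta)^{-s}w(t,x)$ from Remark~\ref{Rem_Delta_s_cont}. Passing to the limit in the right-hand side uses dominated convergence with $u^m,v^m\in L^1((0,T);C(V))$. The result is the analogue of \eqref{limit-1_new}:
\begin{equation*}
\sum_{x}w(t_0,x)(-\Delta)^{-s}\varphi(x)\deg(x)-\sum_{x}w(t_1,x)(-\Delta)^{-s}\varphi(x)\deg(x)=\int_{t_0}^{t_1}\sum_x(u^m-v^m)(\tau,x)\varphi(x)\deg(x)\,{\rm d}\tau .
\end{equation*}
This is the step where the ordering is essential. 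Since $u\ge v\ge0$ and $r\mapsto r^m$ is increasing, we have $u^m-v^m\ge0$, so the right-hand side is nonnegative whenever $\varphi\ge0$.

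Finally I choose $\varphi=\delta_{x_0}/\deg(x_0)$. Then $(-\Delta)^{-s}\varphi(x)=\GM^s(x,x_0)$, so the left-hand side becomes $\|w(t_0)\|_{\ell^1_{x_0,\GM^s}}-\|w(t_1)\|_{\ell^1_{x_0,\GM^s}}$ and the identity shows this quantity is nonnegative. I then set $t_1=t$ and let $t_0\to0$, using the continuity $u,v\in C^0([0,T];\ell^1_{x_0,\GM^s}(V))$; this yields \eqref{newmonotonicity2-bis}.

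The only delicate point is the sign of the nonlinear term, and the ordering hypothesis is exactly what controls it. Without that hypothesis one would need an $\ell^1$-type contraction argument with a sign function, and that is not available for a Green-function-weighted norm.
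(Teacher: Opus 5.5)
Your proof is correct and follows the paper's own argument essentially verbatim: you subtract the two weak dual identities to get the analogue of \eqref{limit-1_new} for $u-v$ (you subtract before passing to the limit in $\psi_n$, which is equivalent), use the ordering to get $u^m-v^m\ge 0$, test with $\varphi=\delta_{x_0}/\deg(x_0)$, and let $t_0\to 0$. Your normalization $u\ge v$ is the one that gives the correct sign for the nonlinear term; the paper writes ``assume $u\le v$'' while still claiming the integral is $\ge 0$, which is a harmless slip because the two cases are symmetric.
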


\begin{proof}
Assume $u\leq v$, by writing \eqref{limit-1_new} for $u$ and $v$ and taking the difference we immediately get
\begin{align*}
&\sum_{x \in V} (u(t_0, x)-v(t_0, x)) \, (- \Delta)^{-s} \varphi(x)\deg(x) \,
- \sum_{x \in V} (u(t_1, x)-v(t_1, x)) \, (- \Delta)^{-s} \varphi(x)\deg(x)\\
& =  \int_{t_0}^{t_1} \sum_{x \in V}  (u^{m}(t, x)-v^{m}(t, x)) \, \varphi(x)\deg(x) \, \, {\rm d}t \ge 0, \notag
\end{align*}
for every nonnegative $\varphi \in C_c(V)$, and all $ 0 <t_0<t_1 $.  Therefore, by taking $\varphi(x)= \frac{\delta_{x_0}(x)}{\deg(x_0)}$, $t_1=t$ and  by letting $t_0\to 0$ we get \eqref{newmonotonicity2-bis}.

\end{proof}

\subsection{Existence of WDS in $\ell^1_{\GM^s}$}\label{subsectionexistence}
We now construct WDS corresponding to nonnegative initial data in $\ell^1_{\GM^s}(V)$ as monotone limits of solutions associated with $\ell^1(V)$ data.

\begin{thm}\label{existenceWDS}
	Let $u_0 $ be any nonnegative initial datum such that $ u_0\in  \ell^1_{\GM^s}(V)$. Then there exists a WDS to Problem~\eqref{NFDE}, in the sense of Definition \ref{defi_WDS}.
\end{thm}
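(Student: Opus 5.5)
We construct the solution as a monotone limit of mild solutions with $\ell^1(V)$ data. We fix an exhaustion of $V$ by finite sets $B_k$, for instance balls $B_k(o)$ about a fixed vertex. We set $u_{0,k}=u_0\chi_{B_k}$, which lies in $\ell^1(V)$, is nonnegative, and is nondecreasing in $k$ with $u_{0,k}\uparrow u_0$ pointwise. Let $u_k$ be the mild solution given by Proposition~\ref{BCPP-Thm}. By Proposition~\ref{thm-weak-mild}, $u_k$ is a WDS. By the comparison principle \eqref{comparison.L1}, the sequence $u_k(t,x)$ is nondecreasing in $k$ for every $(t,x)$. We then define $u(t,x)=\lim_k u_k(t,x)\in[0,\infty]$.

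\textbf{Step 1 (finiteness and the weighted bound).} By \eqref{estimate-1} applied to each $u_k$, for every $x_0$ we have
\[
\sum_x u_k(t,x)\GM^s(x,x_0)\deg(x)\le \|u_{0,k}\|_{\ell^1_{x_0,\GM^s}}\le \|u_0\|_{\ell^1_{x_0,\GM^s}}.
\]
Monotone convergence gives the same bound for $u$. Since $\GM^s(x_0,x_0)\ge \GM^s$-diagonal term $>0$ (the $n=0$ term of \eqref{formulaGS} gives $\GM^s(x_0,x_0)\ge 1/\deg(x_0)$), $u(t,x_0)$ is finite. In fact \eqref{main-estimate-1} gives a pointwise bound: taking $t_1=2t_0$, we get $u_k^m(t_0,x_0)\le C t_0^{-1}\|u_0\|_{\ell^1_{x_0,\GM^s}}$. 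Hence $u$ is locally bounded in time away from $0$, uniformly in $k$, and $u\in\ell^1_{\GM^s}(V)$ for every $t$.

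\textbf{Step 2 (continuity in $\ell^1_{x_0,\GM^s}$).} Since the $u_k$ are ordered WDS, Proposition~\ref{monotonePhi} yields
\[
\|u_k(t)-u_j(t)\|_{\ell^1_{x_0,\GM^s}}\le \|u_{0,k}-u_{0,j}\|_{\ell^1_{x_0,\GM^s}}.
\]
The right-hand side tends to $0$ by dominated convergence, because $u_0\GM^s(\cdot,x_0)\deg$ is summable. Thus $u_k\to u$ in $\ell^1_{x_0,\GM^s}$ uniformly on $[0,T]$. Each $u_k$ lies in $C^0([0,T];\ell^1(V))\subset C^0([0,T];\ell^1_{x_0,\GM^s})$, so the uniform limit $u$ belongs to $C^0([0,T];\ell^1_{x_0,\GM^s}(V))$ and $u(0)=u_0$. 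This uniform Cauchy estimate is the key point, and it is exactly why the weighted contraction of Proposition~\ref{monotonePhi} was prepared.

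\textbf{Step 3 (passing to the limit in \eqref{def_eq}).} Fix $x$ and $\psi\in C^1_c(0,T)$ with support in $[\tau_0,T-\tau_0]$. The linear term converges: by Remark~\ref{Rem_Delta_s_cont},
\[
|(-\Delta)^{-s}u_k(t,x)-(-\Delta)^{-s}u(t,x)|\le \|u_k(t)-u(t)\|_{\ell^1_{x,\GM^s}}\to0
\]
uniformly in $t$. For the nonlinear term, $u_k^m(t,x)\uparrow u^m(t,x)$, and on the support of $\psi$ it is bounded by $C\tau_0^{-1}$ by Step~1. Monotone or dominated convergence then gives $\int u_k^m\psi\to\int u^m\psi$. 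The same bound, together with monotone convergence near $t=0$, where $\int_0^T u^m(t,x)\,dt\le \|u_0\|_{\ell^1_{x,\GM^s}}$ by \eqref{limit-1_new} with $\varphi=\delta_x/\deg(x)$, shows that $u^m\in L^1((0,T);C(V))$. Measurability is inherited from pointwise limits.

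The main obstacle I expect is the integrability of $u^m$ up to $t=0$ in the topology of $C(V)$. Pointwise in $x$, this is handled by the identity above. Since $C(V)$ carries the pointwise Fréchet topology, Bochner integrability reduces to integrability of each seminorm, and these are controlled by $\|u_0\|_{\ell^1_{x,\GM^s}}$.
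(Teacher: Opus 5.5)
Your proof is correct and follows the paper's route. The paper also uses truncated data $u_0\chi_{B_k}$, takes mild solutions (which are WDS), passes to the monotone limit, relies on the weighted bound \eqref{estimate-1} and the ordered stability estimate of Proposition~\ref{monotonePhi}, and then passes to the limit in \eqref{def_eq}. Your direct uniform-Cauchy use of Proposition~\ref{monotonePhi}, which gives $\sup_{t}\|u(t)-u_k(t)\|_{\ell^1_{x_0,\GM^s}}\le\|u_0-u_{0,k}\|_{\ell^1_{x_0,\GM^s}}$, is a slightly cleaner substitute for the paper's equicontinuity plus Ascoli--Arzel\`a step.

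The obstacle you anticipate near $t=0$ is not really there. Your own Step~1 bound, together with $\GM^s(x,x)\deg(x)\ge 1$, already gives $u(t,x)\le\|u_0\|_{\ell^1_{x,\GM^s}}$ for all $t\ge0$. This is the same kind of time-uniform pointwise bound the paper uses, namely $u(t,x)\le\|u_0\|_{\ell^1_{\GM^s}}/(d_0\GM^s(x,o))$.
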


\begin{proof}

Let $u_0 \in \ell^1_{\GM^s}(V)$ be a nonnegative datum. Following a strategy commonly used in the literature about WDS in the continuous setting (see e.g., \cite[Theorem 2.4]{BBGM}), we construct a WDS corresponding to $u_0$ by (monotone) approximation in terms of the mild solutions corresponding to $ \ell^1(V)$ initial data, which by Proposition \ref{thm-weak-mild} are also WDS. More precisely, to $u_0$ we associate the sequence 
\begin{equation*}\label{monotone-approx-1}
u_{0,n} = \chi_{K_n} u_0  \qquad \text{for } n \in \mathbb{N} \,,
\end{equation*}
where $\{K_n\}_n$ is an increasing sequence of compact sets such that $\bigcup_n K_n =V$.

 Since each $ u_{0,n} $ is compactly supported, $u_{0,n}\in \ell^1(\hn) $ and, by construction, $u_{0,n} \leq u_{0,n+1} \leq u_{0} $. If we let $ u_n = u_n(t,x) $ denote the (mild and) WDS to \eqref{NFDE} with initial datum $ u_{0,n} $, from \eqref{comparison.L1} we deduce that $u_n(t,x)\le u_{n+1}(t,x)$ for every $ t>0 $ and $ x \in \hn $. Therefore, the sequence of solutions $ \{ u_n \} $ is monotone increasing and the following pointwise limit exists
\begin{equation*}
u(t,x)=\lim_{n\to\infty}u_{n}(t,x)\,,
\end{equation*}
for every $ t>0 $ and $ x \in \hn $. We prove that $u$ is a WDS with corresponding initial datum $u_0 $.

We start by showing that, $ u_{n}(t,\cdot) \to u(t,\cdot) $ in $ C^0([0,T]; \ell^1_{x_0,\GM^s}(V)) $ as $ n \to \infty $,  for all $ x_0 \in V $. To this aim we first notice that Proposition \ref{crucial} yields
$$
 \sum_{x \in V} u_n(t, x)  \, \GM^s (x, x_0) \deg(x)\, \leq \sum_{x \in V} u_{0,n}(x) \,  \GM^s(x, x_0) \deg(x)\,  \qquad \text{for all $t \ge 0 $ and all $ x_0 \in V $} \, .
$$
By the monotone convergence theorem, $ u_{0,n} \to u_0 $ in $ \ell^1_{x_0,\GM^s}(V) $ as $ n \to \infty $, we obtain
\begin{align*}
 \sum_{x \in V} u(t, x)  \, \GM^s (x, x_0) \deg(x)\, \leq \sum_{x \in V} u_0(x) \,  \GM^s(x, x_0) \deg(x)\,  \qquad \text{for all $t \ge 0 $ and all $ x_0 \in V $} 
\end{align*}
which in particular yields
\begin{align}\label{stability_G2}
 \left\| u(t)\right\|_{\ell^1_{x_0,\GM^s}(V) }\leq   \left\| u_0\right\|_{\ell^1_{x_0,{\GM^s}}(V)}  \qquad \text{for all $t \ge 0 $ and all $ x_0 \in V.$} 
\end{align}
  {Notice that we cannot apply Proposition \ref{crucial} directly to obtain the above estimate, since we do not yet know that $u$ is a WDS.} Hence, $ u(t,\cdot) \in \ell^1_{x_0,\GM^s}(V) $ for all $t>0$ and all $ x_0 \in V$. In particular, by monotone convergence, $ u_{n}(t,\cdot) \to u(t,\cdot) $ in $ \ell^1_{x_0,\GM^s}(V) $ as $ n \to \infty $. 
Finally, by repeating the same arguments outlined in  the proof of \cite[Theorem 2.4]{BBGM} which rely on the exploitation of the stability estimate \eqref{newmonotonicity2-bis} combined with the fact that, for $n$ fixed, the maps $[0,T]\ni t\mapsto \|u_n(t,\cdot)\|_{\ell^1_{x_0,\GM^s}(V)}$ are uniformly continuous, one may show that the sequence $ \{ u_n \} $ is uniformly equicontinuous in $ C^0([0,T]; \ell^1_{x_0,\GM^s}(V)) $, for every fixed $ T>0 $. Then, by Ascoli-Arzelà theorem it follows that $ u_{n}(t,\cdot) \to u(t,\cdot) $ in $ C^0([0,T]; \ell^1_{x_0,\GM^s}(V)) $ as $ n \to \infty $ and $ u \in C^0([0,T]; \ell^1_{x_0,\GM^s}(V))  $ as required in Definition \ref{defi_WDS}.

Now we prove that $  u_n \to u $ also in $L^m((0,T); C(V)) $ as $ n \to \infty $.  From \eqref{stability_G2}, for all $x \in V$ we have that
$$u(t,x)\leq \frac{\left\| u_{0}\right\|_{\ell^1_{\GM^s}(V)}}{d_0 \GM^s(x, o)}\,.$$
In turn, for every $T>0$  we deduce that $ u \in L^m((0,T); C(V))  $.  Again, by monotonicity, this shows that $  u_n \to u $ as $ n \to \infty $ in $L^m((0,T); C(V)) $.

Finally, we show that $u$ satisfies \eqref{def_eq2}.

Since $ u_n$ is a WDS to \eqref{NFDE} for all $n\in \N$ (with initial datum $ u_{0,n} $), it satisfies
  \begin{equation}\label{u_nWDS}
\int_{0}^{T}  \partial_t \psi(t) \,  (- \Delta)^{-s} u_n(t,x)  \, {\rm d}t - \int_{0}^{T}   u_n^m(t,x)  \, \psi(t) \, {\rm d}t = 0
\end{equation}
for every $T>0, x \in V$ and every test function $\psi \in C^1_c(0,T)$.

Thanks to the convergence of $  u_n \to u $ in $L^m((0,T); C(V))$, we get
$$
\int_{0}^{T} u_n^m(t,x)  \, \psi(t) \,{\rm d}t  \underset{n \to \infty}{\longrightarrow} \int_{0}^{T}   u^m(t,x)  \, \psi(t) \, {\rm d}t 
$$
for every $ T>0, x \in V$ and every test function $\psi \in C^1_c(0,T)$.

As for the first term, we already know that $ u \in C^0([0,T]; \ell^1_{x_0,\GM^s}(V)) $; by this, from Remark \ref{Rem_Delta_s_cont} we infer that $(-\Delta)^{-s} u(t,\cdot) \in   C^0([0,T]; C(V))$. Since, by definition of $  (- \Delta)^{-s} $,  also the sequence $ \{ (- \Delta )^{-s} u_n \} $ is monotone increasing, the convergence in $ L^1((0,T); C(V)) $ is assured and we conclude that 
$$
\int_{0}^{T} \partial_t \psi(t) \,  (- \Delta)^{-s} u_n(t,x)  \, {\rm d}t   \underset{n \to \infty}{\longrightarrow} \int_{0}^{T}  \partial_t \psi(t) \,  (- \Delta)^{-s} u(t,x)  \, {\rm d}t  
$$
for every $ T>0, x \in V $ and every test function $\psi \in C^1_c(0,T)$.
The two limits above combined with \eqref{u_nWDS} yield the result.

\end{proof}

\begin{rem}\label{minimal WDS}
By exploiting the stability inequality provided by Proposition~\ref{monotonePhi} and arguing as in the proof of Theorem~4.5 in \cite{BV1}, one can show that the WDS constructed in Theorem~\ref{existenceWDS}, usually referred to in the literature as the \emph{minimal} WDS, is independent of the chosen monotone approximating sequence of initial data. In this sense, such a solution is unique within this class.
  {In particular, the same stability inequality implies that, whenever the initial datum $u_0\in \ell^1(V)$, the corresponding minimal WDS coincides with the (unique) mild solution.}
\end{rem}

\section{Smoothing estimates of WDS on trees}\label{trees}
In this section we assume that the graph is a tree  with standard weights such that  {\eqref{degq+1} holds} for some integer $q\geq 2$. In Subsection \ref{Greentrees} we recalled some useful pointwise and integral estimates for the fractional Green function that will be applied below. 

\subsection{ Smoothing estimates for WDS with initial datum in $\ell^1$ on trees.}
We begin with the case of nonnegative initial data in $\ell^1(V)$.  

\begin{thm}\label{thm.smoothing.HN-like}
 Let $u$ be a WDS to \eqref{NFDE} corresponding to any nonnegative initial datum $u_0 \in \ell^1(\hn)  $. Then there exists $ C = C (s,m)>0$ such that
\begin{equation}\label{thm.smoothing.HN-like.estimate.2}
\left\| u(t) \right\|_{\ell^\infty(\hn)}  \leq \frac{C}{t^{\frac{1}{m-1}}} \left[1+ \log\!\left(t^{\frac{1}{m-1}} \left\| u_0 \right\|_{\ell^1(\hn)} \right)\right]^{\frac{s}{m-1}} \qquad \forall t\geq \|u_0\|_{\ell^1(\hn)}^{-(m-1)}\,.
\end{equation}

\end{thm}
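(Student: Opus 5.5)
Fix $x_0\in V$ and $t>0$. The starting point is the lower bound in \eqref{main-estimate-1} of Proposition~\ref{crucial}, with $t_0=t$ and $t_1=2t$:
\[
C\,t\,u^m(t,x_0)\le \sum_{x\in V} u(t,x)\,\GM^s(x,x_0)\deg(x),
\]
where we drop the nonnegative term involving $u(2t,\cdot)$. The whole argument then reduces to bounding this Green-weighted sum in terms of $\|u(t)\|_\infty$ and $\|u(t)\|_{\ell^1}\le\|u_0\|_{\ell^1}$.

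To do this, split the sum at radius $r\in\mathbb N$, to be chosen, into a part over the ball $B_r(x_0)$ and a part over its complement.
\begin{itemize}
\item On $B_r(x_0)$, bound $u(t,x)\le\|u(t)\|_\infty$. This leaves $\sum_{y\in B_r(x_0)}\GM^s(x_0,y)\deg(y)$, which by \eqref{comp:green} of Proposition~\ref{intestimateGreentrees} is at most $(q+1)\sum_{y_q\in B_r(o_q)}\GM^s_q(o_q,y_q)$. On $T_q$ the sphere of radius $k$ has about $q^k$ points. By \eqref{stimaGs} (or directly \eqref{stimapuntualeGs} on $T_q$ with $\GM_q=\frac{q}{q-1}q^{-k}$), $\GM^s_q\lesssim (k+1)^{s-1}q^{-k}$. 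Summing over $k\le r$ gives $C(r+1)^s$.
\item Off the ball, use \eqref{stimaGs} again: $\GM^s(x,x_0)\le C(r+1)^{s-1}q^{-r}\le C q^{-r}$, so this part is at most $C q^{-r}\|u_0\|_{\ell^1}$, using mass nonincrease (Proposition~\ref{BCPP-Thm} with $v=0$).
\end{itemize}
The key point is that $\GM^s$ restricted to $B_r$ is only polynomially large in $\ell^1$, while outside $B_r$ it is exponentially small. This step is exactly where the tree comparison is essential.

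Taking the supremum over $x_0$ and writing $U=\|u(t)\|_\infty$, we get
\[
t\,U^m\le C\big[(r+1)^s U + q^{-r}\|u_0\|_{\ell^1}\big].
\]
Choose $r\approx \log_q\!\big(t^{1/(m-1)}\|u_0\|_{\ell^1}\big)\ge0$; this is admissible exactly when $t\ge\|u_0\|_{\ell^1}^{-(m-1)}$. Then $q^{-r}\|u_0\|_{\ell^1}\approx t^{-1/(m-1)}$. Consider two cases.
\begin{itemize}
\item If the first term dominates, then $tU^{m-1}\le C(r+1)^s$, which gives the claimed bound.
\item Otherwise $tU^m\le C t^{-1/(m-1)}$, so $U\le C t^{-1/(m-1)}$, which is even better.
\end{itemize}
In both cases $U\le C t^{-1/(m-1)}\big[1+\log(t^{1/(m-1)}\|u_0\|_{\ell^1})\big]^{s/(m-1)}$. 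The constants depend on $q$ too; I'd absorb them, since $q$ is fixed.

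The main obstacle is the ball estimate $\sum_{B_r}\GM^s\deg\lesssim (r+1)^s$. A pointwise bound using only \eqref{stimaGs} fails, because spheres in $T$ can be much larger than $q^k$ when degrees are unbounded. So one really needs the integral comparison \eqref{comp:green}, followed by an explicit computation on $T_q$. There I would use \eqref{formulaGS} and the radial structure of $\GM_q$ to confirm that the $s$-power, not $s-1+1$ losses, appears.
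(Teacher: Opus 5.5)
Your proposal is correct and follows essentially the paper's proof. It starts from the lower bound in \eqref{main-estimate-1} with $t_1=2t_0$ and splits the Green-weighted sum at radius $R$. The ball part is bounded by $C(1+R)^s$ via \eqref{comp:green} and the explicit computation on $T_q$. The tail is bounded via \eqref{stimaGs} and $\|u(t)\|_{\ell^1(V)}\le\|u_0\|_{\ell^1(V)}$. Finally you choose $R=[\log_q(t^{1/(m-1)}\|u_0\|_{\ell^1(V)})]$.

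The only cosmetic difference is that you absorb the $\|u(t)\|_{\ell^\infty(V)}$ term by a two-case argument, whereas the paper uses Young's inequality. Both are equally valid, and both tacitly use $\|u(t)\|_{\ell^\infty(V)}\le d_0^{-1}\|u_0\|_{\ell^1(V)}<\infty$.
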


\begin{rem}\label{tempi-piccoli}
The logarithmic correction in \eqref{thm.smoothing.HN-like.estimate.2} is analogous to the sharp smoothing effects established for the porous medium equation ($s=1$) on the hyperbolic space and, more generally, on Cartan-Hadamard manifolds with spectral gap; see \cite{GM,V4}. Such a similarity is consistent with the well-known analogy between trees and negatively curved geometries.
If the WDS considered in Theorem \ref{thm.smoothing.HN-like} is the {minimal} WDS solution corresponding to $u_0 \in \ell^1(\hn) $ (see Remark \ref{minimal WDS}), by Proposition \ref{BCPP-Thm} we have that $$\left\| u(t) \right\|_{\ell^\infty(\hn)}  \leq  {\frac{1}{d_0}}\left\| u(t) \right\|_{\ell^1(\hn)}  \leq {\frac{1}{d_0}}\|u_0\|_{\ell^1(\hn)} \qquad \forall t\geq 0\,.$$
  {In this case, the estimate above complements \eqref{thm.smoothing.HN-like.estimate.2} by providing an $\ell^\infty$ bound for small times.}

\end{rem}

\begin{proof}

 Let $u$ be a WDS to \eqref{NFDE} corresponding to any nonnegative initial datum $u_0 \in \ell^1(\hn)  $. For $ t_0>0 $ fixed, writing \eqref{main-estimate-1} for $t_1 = 2t_0$, we get:
	\begin{align}\label{thm.smoothing.HN-like.proof.1}
	u^m&(t_0, x_0) 
	\leq \frac{2^{\frac{m}{m-1}}}{ t_0} \sum_{x \in V} u(t_0, x) \, \GM^s(x, x_0) \, \dg(x)=(I)+(II)\\
	&= \frac{ 2^{\frac{m}{m-1}}}{ t_0} \sum_{x \in B_{R}(x_0)}   u(t_0, x) \, \GM^s(x, x_0)  \, \dg(x)
	+  \frac{  2^{\frac{m}{m-1}}}{ t_0} \sum_{x \in \hn\setminus B_{R}(x_0)}  \, u(t_0, x)  \, \GM^s(x, x_0) \, \dg(x) \, , \nonumber
	\end{align}
	for all $R\in \N$. 
	
	In order to estimate $ (I) $, we first notice that, by combining \eqref{comp:green} and \eqref{stimaGs}, we get
	\begin{align*}
	\notag\sum_{x \in B_{R}(x_0)}    \GM^s(x, x_0)  \, \dg(x) &\le   C \frac{q}{s}\sum_{j=0}^R q^jq^{-j}(1+j)^{s-1}\le C (1+R)^{s}  \qquad \text{for all } R\in \N \, .
	\end{align*}
    where $C>0$ depends on $s$ and $q$. Then, by Young's inequality, it follows that
	\begin{align}\label{thm.smoothing.HN-like.proof.2}
	(I)&\le \frac{1}{m} \left\| u(t_0) \right\|_{\ell^\infty(\hn)}^m + \frac{C}{t_0^{\frac{m}{m-1}}} \left( \sum_{x \in B_{R}(x_0)} \GM^s(x, x_0) \, \dg(x) \right)^{\frac{m}{m-1}} \\
	&\le \frac{1}{m} \left\|u(t_0)\right\|_{\ell^\infty(\hn)}^m+\frac{C}{t_0^{\frac{m}{m-1}}}	\, (1+R)^{\frac{sm}{m-1}} \qquad \text{for all } R\in \N \,,\nonumber
	\end{align}
	where $C>0$ is another constant depending only on $q,s,m$.

To estimate  $ (II) $ we observe that, for all $R\in \N$, by \eqref{stimaGs} it holds
\begin{align}\label{thm.smoothing.HN-like.proof.3}
(II) &\le \frac{C}{t_0} \sum_{x \in \hn\setminus B_{R}(x_0)}  \, u(t_0, x)  \,  (d(x,x_0)+1)^{s-1} q^{-d(x,x_0)} \, \dg(x) \\
& \leq \frac{C}{t_0} \, \frac{\left\| u(t_0) \right\|_{\ell^1(\hn)}}{(1+R)^{1-s}q^R}  \leq \frac{C}{t_0} \, \frac{\left\| u_0 \right\|_{\ell^1(\hn)}}{(1+R)^{1-s}q^R} \, . \notag
\end{align}
 Combining \eqref{thm.smoothing.HN-like.proof.1}, \eqref{thm.smoothing.HN-like.proof.3} and \eqref{thm.smoothing.HN-like.proof.2}, and taking the supremum over $x_0\in \hn$, we end up with
\begin{align*} 
\left\| u(t_0) \right\|_{\ell^\infty(\hn)}^m &\le C\,\frac{(1+R)^{\frac{sm}{m-1}} }{t_0^{\frac{m}{m-1}}} \left( 1+ \frac{t_0^{\frac{1}{m-1}}\|u_0\|_{\ell^1(\hn)}}{(1+R)^{\frac{m-1+s}{m-1}} q^R}\right)\\& \le C\,\frac{(1+R)^{\frac{sm}{m-1}} }{t_0^{\frac{m}{m-1}}} \left( 1+ \frac{t_0^{\frac{1}{m-1}}\|u_0\|_{\ell^1(\hn)}}{ q^R}\right)
 \qquad \forall R \in \N  \,  . \notag
\end{align*}
We now choose $R=[A]\in \N$ (where $[\cdot]$ denotes the integer part) with $A\geq 0$ defined by
$$  t_0^{\frac{1}{m-1}}\|u_0\|_{\ell^1(\hn)}=q^A\,.$$
This choice is admissible provided $t_0\geq \|u_0\|_{\ell^1(\hn)}^{-(m-1)}$ and allows to conclude that
\begin{align*} \label{thm.smoothing.HN-like.proof.10}
\left\| u(t_0) \right\|_{\ell^\infty(\hn)}^m &\le C\,\frac{(1+A)^{\frac{sm}{m-1}} }{t_0^{\frac{m}{m-1}}} \left( 1+ \frac{q t_0^{\frac{1}{m-1}}\|u_0\|_{\ell^1(\hn)}}{ q^A}\right)\\& 
= \frac{C (1+q)}{t_0^{\frac{m}{m-1}}} \left[1+ \log\!\left(t_0^{\frac{1}{m-1}} \left\| u_0 \right\|_{\ell^1(\hn)} \right)\right]^{\frac{sm}{m-1}}\,.
 \notag
\end{align*}
Hence, inequality \eqref{thm.smoothing.HN-like.estimate.2} follows. \end{proof}

\subsection{Smoothing estimates for WDS with initial datum in $\ell^1_{\GM^s}$ on trees}

We provide a smoothing estimate for the WDS constructed in Theorem \ref{existenceWDS}. Since it is obtained as the monotone limit of mild solutions with initial data in $\ell^1(V)$ (which are WDS in view of Proposition \ref{thm-weak-mild}), we start by showing a smoothing estimate holding for the latter.

\begin{lem}
 Let $u$ be a WDS to \eqref{NFDE} corresponding to a nonnegative initial datum $u_0 \in \ell^1(\hn)  $. Then there exists $ C = C (q,s,m)>0$ such that
\begin{equation}\label{thm.smoothing.HN-like.estimate.22}
 \left\| u(t) \right\|_{\ell^\infty(V)}  \le \dfrac{C}{t^{\frac1m}}  \left\| u_{0}\right\|_{\ell^1_{\GM^s}(V)}^{\frac1m} \qquad \forall t\geq \left\| u_{0}\right\|_{\ell^1_{\GM^s}(V)}^{-(m-1)}\,.
\end{equation}

\end{lem}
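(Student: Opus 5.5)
The plan is to run the argument of Theorem \ref{thm.smoothing.HN-like}, but to replace the $\ell^1$ norm by the weighted norm throughout, so that no splitting into a ball and its complement is needed. Fix $t_0>0$ and $x_0\in V$, and write the lower bound in \eqref{main-estimate-1} with $t_1=2t_0$. Since $u\ge0$, we may drop the term involving $u(t_1)$, and we obtain
\[
u^m(t_0,x_0)\le \frac{2^{\frac{m}{m-1}}}{t_0}\sum_{x\in V}u(t_0,x)\,\GM^s(x,x_0)\deg(x).
\]
Next we apply the weighted $\ell^1$ contraction \eqref{estimate-1} of Proposition \ref{crucial}, which is valid because $u$ is a WDS with datum in $\ell^1(V)\subset\ell^1_{\GM^s}(V)$. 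It bounds the right-hand side by $\frac{2^{m/(m-1)}}{t_0}\|u_0\|_{\ell^1_{x_0,\GM^s}(V)}$. Taking the supremum over $x_0$ then gives
\[
\|u(t_0)\|_{\ell^\infty(V)}^m\le \frac{C}{t_0}\,\|u_0\|_{\ell^1_{\GM^s}(V)},
\]
which is \eqref{thm.smoothing.HN-like.estimate.22} with $C=2^{1/(m-1)}$.

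This direct bound actually holds for every $t>0$, so the restriction $t\ge\|u_0\|_{\ell^1_{\GM^s}}^{-(m-1)}$ is not needed here. I expect the authors may have intended a refinement in which tree geometry enters, which would explain why $C$ depends on $q$. One such route is to split the sum, as in the proof of Theorem \ref{thm.smoothing.HN-like}, into $B_R(x_0)$ and its complement. On the ball, Young's inequality together with the bound $\sum_{B_R}\GM^s\deg\le C(1+R)^s$ (from \eqref{comp:green} and \eqref{stimaGs}) applies. Off the ball, one would bound $\GM^s(x,x_0)$ by a multiple of $\GM^s(x,y)$ for suitable $y$, to recover the weighted norm.

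The only point that needs care is the justification of the first step: \eqref{main-estimate-1} relies on the time monotonicity \eqref{mon-est.OLD}, which is available for mild solutions. For a WDS with datum in $\ell^1(V)$, I would invoke Remark \ref{minimal WDS}, or simply use the mild solution, which is a WDS by Proposition \ref{thm-weak-mild}; this is exactly how the lemma is used later. With that in place, the simple chain of inequalities above suffices to prove the stated estimate.
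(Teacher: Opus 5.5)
Your argument is correct, and it is shorter than the paper's. The paper starts from the same inequality, namely the lower bound in \eqref{main-estimate-1} with $t_1=2t_0$. It then follows the proof of Theorem \ref{thm.smoothing.HN-like}:
\begin{itemize}
\item it splits the sum into $B_R(x_0)$ and its complement;
\item it controls the ball part by Young's inequality and the tree bound $\sum_{x\in B_R(x_0)}\GM^s(x,x_0)\deg(x)\le C(1+R)^s$, obtained from \eqref{comp:green} and \eqref{stimaGs};
\item it absorbs $\frac1m\|u(t_0)\|_{\ell^\infty(V)}^m$ into the left-hand side;
\item it bounds the complement by $\frac{C}{t_0}\|u_0\|_{\ell^1_{x_0,\GM^s}(V)}$ using \eqref{estimate-1};
\item it chooses $R$ so that the leftover term $(1+R)^{\frac{sm}{m-1}}t_0^{-\frac{m}{m-1}}$ is dominated by $\|u_0\|_{\ell^1_{\GM^s}(V)}/t_0$.
\end{itemize}
That last step is the source of both the restriction $t_0\ge\|u_0\|_{\ell^1_{\GM^s}(V)}^{-(m-1)}$ and the dependence of $C$ on $q$ and $s$.

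The complement is estimated by the same weighted contraction \eqref{estimate-1} that you apply to the whole sum, so the splitting gains nothing here. Even within the paper's scheme, $R=0$ would suffice. Your version therefore:
\begin{itemize}
\item avoids the absorption step;
\item gives the explicit constant $2^{1/(m-1)}$;
\item holds for every $t>0$;
\item uses no tree structure, so it is valid on any graph satisfying \eqref{Nash}.
\end{itemize}

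Your guess that the authors intended a geometric refinement off the ball, comparing $\GM^s(x,x_0)$ with $\GM^s(x,y)$, does not match the paper. Its off-ball estimate is simply \eqref{estimate-1}.

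Your caveat about \eqref{mon-est.OLD} is well placed. Proposition \ref{crucial} is stated for arbitrary WDS, but its proof uses the time monotonicity, which is proved only for mild solutions. The lemma is in fact applied only to the mild solutions $u_n$ in the proof of Theorem \ref{thm.smoothing.pesato}.
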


\begin{proof}
By repeating the first part of the proof of Theorem \ref{thm.smoothing.HN-like} but replacing the estimate  $ (II) $ with the following
\begin{align*}
(II) \le \frac{C}{t_0} \left\| u_{0}\right\|_{\ell^1_{x_0,\GM^s}(V)} 
\end{align*}
 and taking the supremum over $x_0\in \hn$, we end up with
\begin{align} \label{thm.smoothing.HN-like.proof.91}
\left\| u(t_0) \right\|_{\ell^\infty(\hn)}^m &\le C\,\left(\frac{(1+R)^{\frac{sm}{m-1}} }{t_0^{\frac{m}{m-1}}} + \frac{\left\| u_{0}\right\|_{\ell^1_{\GM^s}(V)}}{t_0}\right)
 \qquad \forall R \in \N  \,  ,
\end{align}
We now choose $R=[B]\in \N$ with $B\geq 0$ defined by
$$(1+B)^{\frac{sm}{m-1}}= t_0^{\frac{1}{m-1}}\|u_0\|_{\ell^1_{\GM^s}(V)}\,.$$
This choice is admissible provided $ t_0\geq \left\| u_{0}\right\|_{\ell^1_{\GM^s}(V)}^{-(m-1)}$ and inserted in \eqref{thm.smoothing.HN-like.proof.91} yields
$$\left\| u(t_0) \right\|_{\ell^\infty(\hn)}^m \le C\,\left(\frac{(1+B)^{\frac{sm}{m-1}} }{t_0^{\frac{m}{m-1}}} + \frac{\left\| u_{0}\right\|_{\ell^1_{\GM^s}(V)}}{t_0}\right)=2 C\,\frac{\left\| u_{0}\right\|_{\ell^1_{\GM^s}(V)}}{t_0}$$
which yields the desired conclusion. \end{proof}

Finally we state our smoothing estimates for WDS with initial datum in $\ell^1_{\GM^s}$.
\begin{thm}\label{thm.smoothing.pesato}
 Let $u$ be the WDS to \eqref{NFDE}, constructed in Theorem \ref{existenceWDS}, corresponding to any nonnegative initial datum $u_{0}\in \ell^1_{\GM^s}(V)$. 
 Then there exists $ C = C (q,s,m)>0$ such that
\begin{equation*}
 \left\| u(t) \right\|_{\ell^\infty(V)}  \le \dfrac{C}{t^{\frac1m}}  \left\| u_{0}\right\|_{\ell^1_{\GM^s}(V)}^{\frac1m} \qquad \forall t\geq \left\| u_{0}\right\|_{\ell^1_{\GM^s}(V)}^{-(m-1)}\,.
\end{equation*}
\end{thm}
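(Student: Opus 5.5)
The plan is to pass to the limit in the preceding Lemma along the monotone approximation used in Theorem~\ref{existenceWDS}. Recall that the WDS $u$ constructed there is the pointwise monotone limit $u(t,x)=\lim_{n\to\infty}u_n(t,x)$. Here $u_n$ is the mild solution, hence a WDS by Proposition~\ref{thm-weak-mild}, with initial datum $u_{0,n}=\chi_{K_n}u_0\in\ell^1(V)$.

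First I control the weighted norms of the truncated data. Since $0\le u_{0,n}\le u_0$ pointwise and $\GM^s\ge 0$, for every $x_0\in V$ we have $\|u_{0,n}\|_{\ell^1_{x_0,\GM^s}(V)}\le\|u_0\|_{\ell^1_{x_0,\GM^s}(V)}$. Taking the supremum over $x_0$ gives $\|u_{0,n}\|_{\ell^1_{\GM^s}(V)}\le\|u_0\|_{\ell^1_{\GM^s}(V)}$.

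The main point to check is the time range, because the threshold in the Lemma depends on the datum. The Lemma applied to $u_n$ gives the bound
\[
\|u_n(t)\|_{\ell^\infty(V)}\le C\,t^{-1/m}\|u_{0,n}\|^{1/m}_{\ell^1_{\GM^s}(V)}
\]
only for $t\ge\|u_{0,n}\|_{\ell^1_{\GM^s}(V)}^{-(m-1)}$. This threshold is larger than the target one, since the norm of $u_{0,n}$ is smaller. To avoid this I will not use the Lemma's statement directly. Instead I use the intermediate inequality \eqref{thm.smoothing.HN-like.proof.91}, whose derivation only uses \eqref{main-estimate-1}, \eqref{estimate-1}, \eqref{comp:green} and \eqref{stimaGs}. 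In its last term I replace $\|u_{0,n}\|_{\ell^1_{\GM^s}(V)}$ by the larger quantity $\|u_0\|_{\ell^1_{\GM^s}(V)}$, which gives
\[
\|u_n(t_0)\|^m_{\ell^\infty(V)}\le C\Big(\frac{(1+R)^{\frac{sm}{m-1}}}{t_0^{\frac{m}{m-1}}}+\frac{\|u_0\|_{\ell^1_{\GM^s}(V)}}{t_0}\Big)\qquad\forall R\in\mathbb N .
\]
Now I choose $R=[B]$ with $(1+B)^{\frac{sm}{m-1}}=t_0^{\frac1{m-1}}\|u_0\|_{\ell^1_{\GM^s}(V)}$. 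This is admissible, i.e. $B\ge0$, exactly when $t_0\ge\|u_0\|_{\ell^1_{\GM^s}(V)}^{-(m-1)}$. With this choice,
\[
\|u_n(t)\|_{\ell^\infty(V)}\le C\,t^{-1/m}\|u_0\|_{\ell^1_{\GM^s}(V)}^{1/m}
\]
for every $n$ and every $t$ in the required range, with $C=C(q,s,m)$ independent of $n$. If $u_0\equiv0$ there is nothing to prove.

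Finally, fix $x\in V$ and $t\ge\|u_0\|_{\ell^1_{\GM^s}(V)}^{-(m-1)}$. Since $u_n(t,x)\uparrow u(t,x)$, we get $u(t,x)\le\sup_n\|u_n(t)\|_{\ell^\infty(V)}$, which is bounded by the right-hand side above. Taking the supremum over $x\in V$ gives the claim. The only delicate step is the uniformity of the time threshold in $n$, and it is handled by redoing the choice of $R$ with the norm of $u_0$ in place of the norm of $u_{0,n}$.
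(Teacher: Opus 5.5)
Your proof is correct and follows the paper's route: apply the $\ell^1$-data smoothing Lemma to the monotone approximants $u_n$, use $\|u_{0,n}\|_{\ell^1_{\GM^s}(V)}\le\|u_0\|_{\ell^1_{\GM^s}(V)}$, and pass to the limit through $u_n(t,x)\uparrow u(t,x)$. Going back to \eqref{thm.smoothing.HN-like.proof.91} and choosing $R$ with $\|u_0\|_{\ell^1_{\GM^s}(V)}$ in place of $\|u_{0,n}\|_{\ell^1_{\GM^s}(V)}$ is more careful than the paper, which applies \eqref{thm.smoothing.HN-like.estimate.22} to $u_n$ for all $t\ge\|u_0\|_{\ell^1_{\GM^s}(V)}^{-(m-1)}$ even though the Lemma as stated only covers $t\ge\|u_{0,n}\|_{\ell^1_{\GM^s}(V)}^{-(m-1)}$.
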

\begin{proof}

For a general initial datum $u_{0} \in \ell^1_{\GM^s}(V)$ we consider the corresponding approximating sequences of initial data $\{u_{0,n}\}_n \subset \ell^1(\hn)$ and corresponding WDS $\{u_n\}_n\subset  \ell^1(\hn)$ defined in the proof of Theorem \ref{existenceWDS}. From \eqref{thm.smoothing.HN-like.estimate.22} we get
 
\begin{align*}
\left\| u_n( t_0) \right\|_{\ell^\infty(V)} \leq \dfrac{1}{t_0^{\frac1m}}  \left\| u_{0,n}\right\|_{\ell^1_{\GM^s}(V)}^{\frac1m} \leq \dfrac{1}{t_0^{\frac1m}}  \left\| u_{0}\right\|_{\ell^1_{\GM^s}(V)}^{\frac1m} \qquad \forall t_0\geq \left\| u_{0}\right\|_{\ell^1_{\GM^s}(V)}^{-(m-1)}\,.
\end{align*}
since $ u_{0,n} \le u_0 $. Therefore, 
$$
 \left\| u(t_0) \right\|_{\ell^\infty(V)} \le \liminf_{n\to\infty} \left\| u_n(t_0) \right\|_{\ell^\infty(V)} \le \dfrac{1}{t_0^{\frac1m}}  \left\| u_{0}\right\|_{\ell^1_{\GM^s}(V)}^{\frac1m} \qquad \forall t_0\geq \left\| u_{0}\right\|_{\ell^1_{\GM^s}(V)}^{-(m-1)}\,.
 $$
 \end{proof}

\section{  {Comparison results and heat kernel estimates on trees}}\label{last}

In this section we prove several results on trees  {with standard weights}, including comparison arguments and heat kernel estimates, which are used at the end of the section to prove the integral estimates for the fractional Green function stated in Proposition \ref{intestimateGreentrees}. Related heat kernel comparison results on model trees were previously obtained in \cite{Woj}.

Given a radial function $u$ with respect to a point $o$ on a tree, we sometimes write $u(n)$ instead of $u(x)$, where $x$ is a vertex at distance $n$ from $o$.

Using the notation introduced in Section \ref{Greentrees}, given a radial function $u$ on $V_q$, we call $u(d(\cdot,o))$ the corresponding transplanted function on $V$ with respect to the point $o \in V$. We say that a radial function $u$ is 2-step decreasing if $$u(n+2)\le u(n) \qquad\forall n \in \mathbb N.$$

At first we prove a comparison between the Laplacian on a tree  {satisfying \eqref{degq+1}} and the Laplacian on the homogeneous tree when acting on radial 2-step decreasing functions.

\begin{lem}\label{lap_rad}
Suppose that $u$ is a radial 2-step decreasing function on $V_q$  {and that $T$ is a tree with standard weights satisfying \eqref{degq+1}}. Then, the transplanted function $u:V \to \mathbb R$ satisfies
\begin{align*}
   \Delta u(x) \le   \Delta^{(q)}u(x_q) \qquad \forall x_q \in V_q \ : d_q(x_q,o_q)=d(x,o).
\end{align*}
\end{lem}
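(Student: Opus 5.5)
Write $n=d(x,o)$. The plan is a direct pointwise computation of both Laplacians at a vertex at distance $n$ from the base point, using only the tree structure and the 2-step decreasing property. Recall that $\Delta u(x)=\frac{1}{\deg(x)}\sum_{y\sim x}u(y)-u(x)$ for standard weights. We treat the cases $n=0$ and $n\ge1$ separately.

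\textbf{Case $n=0$.} Every neighbor of $o$ lies at distance $1$. Hence $\Delta u(o)=u(1)-u(0)$, and likewise $\Delta^{(q)}u(o_q)=u(1)-u(0)$. Equality holds, with no assumption needed.

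\textbf{Case $n\ge1$.} Since $T$ is a tree, $x$ has exactly one neighbor at distance $n-1$ from $o$; its remaining $\deg(x)-1$ neighbors are at distance $n+1$. Therefore
\[
\Delta u(x)=\frac{u(n-1)+(\deg(x)-1)\,u(n+1)}{\deg(x)}-u(n).
\]
On $T_q$ the same formula holds with $\deg(x)$ replaced by $q+1$. Define
\[
\phi(D)=\frac{u(n-1)+(D-1)\,u(n+1)}{D}=u(n+1)+\frac{u(n-1)-u(n+1)}{D}.
\]
The 2-step decreasing property gives $u(n+1)\le u(n-1)$, so $\phi$ is nonincreasing in $D>0$. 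Since $\deg(x)\ge q+1$ by \eqref{degq+1}, we get $\phi(\deg(x))\le\phi(q+1)$. Subtracting $u(n)$ from both sides yields $\Delta u(x)\le\Delta^{(q)}u(x_q)$.

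There is no real obstacle here. The only points needing care are that the tree property ensures a unique parent, which holds because the graph has no loops, and that $\deg(x)$ may be infinite-free but large, which is harmless since $\deg(x)<\infty$ by assumption (3) and the formula is exact.
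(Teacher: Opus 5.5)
Your proposal is correct and follows essentially the same route as the paper's proof. Both split into the cases $x=o$ and $x\neq o$, and both rewrite $\Delta u(x)=u(|x|+1)-u(|x|)+\frac{1}{\deg(x)}\bigl(u(|x|-1)-u(|x|+1)\bigr)$, concluding via $u(|x|-1)\ge u(|x|+1)$ and $\deg(x)\ge q+1$.
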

\begin{proof}
Set $|x|=d(x,o)$. Suppose $x \not= o$. Then,
\begin{align*}
    \Delta u(x)&=-u(x)+\frac{1}{\mathrm{deg}(x)}\sum_{y \sim x}u(y)\\ &=-u(x)+\frac{1}{\mathrm{deg}(x)}\bigg((\mathrm{deg}(x)-1)u(|x|+1)+u(|x|-1)\bigg) \\ &=
    -u(x)+u(|x|+1)+\frac{1}{\mathrm{deg}(x)}\bigg(u(|x|-1)-u(|x|+1)\bigg).
\end{align*} Now we use that \begin{align*}
    u(|x|-1)\ge u(|x|+1) \qquad \text{and} \qquad
    \frac{1}{\mathrm{deg}(x)}\le\frac{1}{q+1},
\end{align*} so that
\begin{align*}
    &-u(x)+u(|x|+1)+\frac{1}{\mathrm{deg}(x)}\bigg(u(|x|-1)-u(|x|+1)\bigg) \\ &\le -u(|x|)+u(|x|+1)+\frac{1}{q+1}\bigg(u(|x|-1)-u(|x|+1)\bigg) =\Delta^{(q)}u(x_q).
\end{align*} If $x=o$,
\begin{align*}
  \Delta u(o)=-u(0)+u(1) =\Delta^{(q)}u(o_q).
\end{align*}
This completes the proof.
\end{proof}

The next result provides a class of radial 2-step decreasing functions on $V_q$ to which Lemma \ref{lap_rad} can be applied.

\begin{lem}\label{step}
    The following functions are radial 2-step decreasing:
    \begin{enumerate}
        \item[i)] $h^{(q)}_t(\cdot,o_q)$ on $V_q$;
        \item[ii)] $\sum_{y \in B_r(o_q)}u(d_q(\cdot,y))$ if $u$ is any radial 2-step decreasing function on $V_q$.
    \end{enumerate}
    \end{lem}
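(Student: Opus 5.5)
The plan is to prove the two items separately. Both rely only on the radial structure of $T_q$: in $T_q$ every vertex at distance $n\ge1$ from $o_q$ has one neighbour at distance $n-1$ and $q$ neighbours at distance $n+1$.

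\emph{Item i).} I will use the expansion $h^{(q)}_t(x,o_q)=e^{-t}\sum_{k\ge0}\frac{t^k}{k!}p_k(x,o_q)$ from Subsection~\ref{Greentrees}. Since the coefficients are nonnegative, it suffices to show that each $p_k(\cdot,o_q)$ is radial and 2-step decreasing; radiality is clear by symmetry. Write $p_k(n)$ for the value at distance $n$. The one-step relation $P^{k+1}=P\,P^k$ gives
\[
p_{k+1}(n)=\tfrac{1}{q+1}\bigl(p_k(n-1)+q\,p_k(n+1)\bigr)\quad (n\ge1),\qquad p_{k+1}(0)=p_k(1).
\]
I then argue by induction on $k$. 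The base case $p_0=\delta_{o_q}/(q+1)$ is trivially 2-step decreasing. For the inductive step with $n\ge1$,
\[
p_{k+1}(n+2)=\tfrac{1}{q+1}\bigl(p_k(n+1)+q\,p_k(n+3)\bigr)\le \tfrac{1}{q+1}\bigl(p_k(n-1)+q\,p_k(n+1)\bigr)=p_{k+1}(n).
\]
For $n=0$,
\[
p_{k+1}(2)=\tfrac{1}{q+1}\bigl(p_k(1)+q\,p_k(3)\bigr)\le p_k(1)=p_{k+1}(0).
\]
Summing the series then gives i).

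\emph{Item ii).} Set $f(x)=\sum_{y\in B_r(o_q)}u(d_q(x,y))$. Radiality follows because automorphisms of $T_q$ fixing $o_q$ preserve $B_r(o_q)$ and distances. For monotonicity I will use a layer-cake reduction along each parity class, first for $u$ bounded below, then removing this assumption by truncation. For $j\ge0$ write
\[
u(j)=\ell_{j \bmod 2}+\sum_{J\ge j,\ J\equiv j\ (2)} c_J,\qquad c_J=u(J)-u(J+2)\ge0,
\]
where $\ell_0,\ell_1$ are the limits of $u$ along even and odd integers. Since $d_q(x,y)\equiv |x|+|y| \pmod 2$, the constant part contributes $\sum_{y\in B_r}\ell_{(|x|+|y|)\bmod 2}$. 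This depends only on the parity of $|x|$, so it agrees at distances $n$ and $n+2$. It therefore suffices to prove, for every $J$ and each fixed parity class $S$ of $|y|$,
\[
\#\bigl(B_r(o_q)\cap B_J(x')\cap S\bigr)\le \#\bigl(B_r(o_q)\cap B_J(x)\cap S\bigr).
\]
Here $|x|=n$, $|x'|=n+2$, and $x$ lies on the geodesic from $o_q$ to $x'$ (possible by radiality).

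\emph{Main obstacle.} The counting inequality above is the step I expect to be hardest. I would first try an explicit injection. Classify each $y$ by the vertex $z$ where its geodesic to $o_q$ leaves the segment $[o_q,x']$, and record $|z|$ and the depth of $y$ beyond $z$. A point $y$ close to $x'$ and inside $B_r(o_q)$ either branches off at some $z$ strictly before $x'$, with the same or smaller distance to $x$, or lies in the subtree beyond $x$. In the latter case I map it to a vertex two levels closer to $o_q$ in a branch at $x$ of the same shape; this preserves the parity of $|y|$ and keeps the image in $B_r(o_q)$. The branching count $q$ is identical at every vertex of $T_q$, which should make the map injective. If the injection becomes messy, the fallback is a direct count of both sides as explicit sums of powers of $q$, indexed by the branching point $z$, and comparing them term by term.
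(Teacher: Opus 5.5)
Part i) is correct, and it takes a more elementary route than the paper. The paper quotes an explicit formula expressing $h^{(q)}_t$ through the heat kernel on $\mathbb Z$ (modified Bessel functions), and then obtains $h^{(q)}_t(n)-h^{(q)}_t(n+2)$ as a single nonnegative term. Your induction on $p_{k+1}(n)=\frac{1}{q+1}\bigl(p_k(n-1)+q\,p_k(n+1)\bigr)$, $p_{k+1}(0)=p_k(1)$ needs nothing external.

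In ii), your layer-cake reduction is also sound. Only $u(0),\dots,u(|x|+r+2)$ enter, so truncation is harmless, and the parity-constant part cancels by bipartiteness. It replaces the paper's three-case computation of the coefficients $a_k$ by the counting inequality $|X'_J|\le|X_J|$, where $X_J=B_r(o_q)\cap B_J(x)\cap S$ and $X'_J=B_r(o_q)\cap B_J(x')\cap S$. But that inequality is the whole content of ii), and the injection you sketch for it fails.

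Write $o_q=v_0,\dots,v_n=x,v_{n+1},v_{n+2}=x'$. Since $d_q(x,y)-d_q(x',y)\in\{-2,0,2\}$, the set $X'_J\setminus X_J$ consists exactly of the vertices of the half-tree at $x'$ away from $v_{n+1}$ lying at distance $h_0$ from $x'$. Here $h_0\in\{J-1,J\}$ is the value compatible with the parity class, and the set is nonempty only if $n+2+h_0\le r$. It has $q^{h_0}$ elements.

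Your proposed targets, vertices ``two levels closer to $o_q$ in a branch at $x$'', are the vertices at distance $h_0$ from $x$ whose geodesic to $o_q$ leaves $[o_q,x']$ at $x$. For $n\ge1$ and $h_0\ge1$ there are only $(q-1)q^{h_0-1}$ of them. The reason is that two of the $q+1$ edges at $x$ lie on the segment, while only one edge at $x'$ does. So the map cannot be injective, and ``the branching count $q$ is identical at every vertex'' is precisely the point that breaks. Your fallback, a term-by-term comparison indexed by the branching point $z$, fails for the same reason at $z=x'$, where the $x'$-side is strictly larger.

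The missing vertices are those reached from $x$ through $v_{n-1}$. Whenever $X'_J\setminus X_J\neq\emptyset$, the set $X_J\setminus X'_J$ is exactly the set of vertices at distance $h_0$ from $x$ in the half-tree at $x$ away from $v_{n+1}$. These vertices lie within distance $n+h_0\le r-2$ of $o_q$, have the right parity, and are at distance $h_0+2>J$ from $x'$. An automorphism of $T_q$ fixing $v_{n+1}$ and exchanging $x$ and $x'$ maps $X'_J\setminus X_J$ bijectively onto this set. This gives $|X'_J|\le|X_J|$, with equality whenever $X'_J\setminus X_J\neq\emptyset$. The equality case also shows that no injection avoiding the branch toward $o_q$ can exist.

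With this repair your route is a complete, and rather shorter, alternative to the paper's case analysis. As written, the key step of ii) is missing.
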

    \begin{proof}
    Throughout this proof for every vertex $x \in V_q$ we denote by $|x|$ the distance from $x$ to the reference vertex $o_q$.  We write for simplicity $h_t^{(q)}(|x|)$ in place of $h_t^{(q)}(x,o_q)$.
    
    We first notice that \(h_t^{\mathbb Z}\) on \(\mathbb Z\) is 2-step decreasing since
\[
h_t^{\mathbb Z}(\cdot)=e^{-t} I_{|\cdot|}(t),
\]
where \(I_n\) denotes the modified Bessel function of the first kind of order \(n\). Moreover, it is known that
\[
I_n(t)\ge I_{n+2}(t)
\qquad \forall n\in\mathbb N,
\]
see \cite[Formula (10.04), p.~60]{Olver} or \cite[Lemma 2.4(ii)]{CMS}.
      
Then, statement $i)$ follows from the above together with \cite[Proposition 2.5 (ii)]{CMS}.  Indeed, by \cite[Proposition 2.5 ii)]{CMS} we have that
        \begin{align*}
            h^{(q)}_t(|x|)=\frac{e^{(c_q-1)t}}{t \sqrt q}q^{-|x|/2}\sum_{k \ge 0}q^{-k}(|x|+2k+1)h^{\mathbb Z}_{t c_q}(|x|+2k+1),
        \end{align*} where $c_q= \frac{2q^{1/2}}{q+1}$. It follows that
        \begin{align*}
           h^{(q)}_t(|x|+2)&=\frac{e^{(c_q-1)t}}{t \sqrt q}q^{-|x|/2-1}\sum_{k \ge 0}q^{-k}(|x|+2k+3)[h^{\mathbb Z}_{t c_q}(|x|+2k+3)] \\ 
           &=\frac{e^{(c_q-1)t}}{t \sqrt q} q^{-|x|/2}\sum_{k \ge 1}q^{-k}(|x|+2k+1)[h^{\mathbb Z}_{tc_q}(|x|+2k+1)].
        \end{align*} Hence, \begin{align*}
           &h^{(q)}_t(|x|)-h^{(q)}_t(|x|+2)=\frac{e^{(c_q-1)t}}{t \sqrt q}q^{-|x|/2}(|x|+1) h^{\mathbb Z}_{tc_q}(|x|+1)\ge 0 ,
        \end{align*}and this proves $i)$.

        Now we turn to the proof of $ii)$, see Figure \ref{figura} for a visual aid. Let $u$ be any radial 2-step decreasing function on $V_q$. Fix $o_q \in V_q$ and $r \in \mathbb N$.
Set
\[
F(x)=\sum_{y\in B_r(o_q)} u(d_q(x,y))
= \sum_{k\ge 0} u(k)\, a_k(x),\]

where $a_k(x)=|S_k(x)\cap B_r(o_q)|$ and $S_k(x)=\{y \in V_q \ : d_q(x,y)=k\}$. Notice that by homogeneity $a_k$ is radial and thus $F$ is radial.  Moreover, $a_k(|x|)=0$ if $k< |x|-r$ or $k>|x|+r$.

Let $x^{(2)}$ be a vertex at distance $2$ from $x$ along a geodesic moving away from $o_q$. Observe that $a_k(x^{(2)})=a_k(|x|+2)$.

The behaviour of the coefficients $a_k(x)$ depends on the relative position of $x$ with respect to the ball $B_r(o_q)$. Accordingly, we distinguish three regimes.

\medskip

\noindent
\textbf{Case 1: $|x|\ge r$.} In this case  $x^{(2)}\not\in B_r(o_q)$ and

\[
F(x)= \sum_{k=|x|-r}^{|x|+r} a_k(x)\, u(k), \qquad F(x^{(2)})= \sum_{k=|x|+2-r}^{|x|+2+r} a_k(x^{(2)})\, u(k).
\]

Moreover, any geodesic path from $x^{(2)}$ to a vertex in $B_r(o_q)$ must pass through $x$. 
It follows that, for all  $|x|-r\le k\le |x|+r$,
\[
a_k(x) = a_{k+2}(x^{(2)}).
\]

Therefore,
\begin{align*}
F(x)-F(x^{(2)})
&= \sum_{k=|x|-r}^{|x|+r} a_k(x)\, u(k)
 - \sum_{k=|x|+2-r}^{|x|+2+r} a_k(x^{(2)})\, u(k) = \sum_{k=|x|-r}^{|x|+r} a_k(x)\bigl(u(k)-u(k+2)\bigr)\ge 0,
\end{align*}
where we used the $2$-step monotonicity of $ {u}$. 

\medskip

\noindent
\textbf{Case 2: $|x|\le r-2$.} Notice that this assumption forces $r \ge 2$.
In this case both $x$ and $x^{(2)}$ belong to $B_r(o_q)$, hence
\[
F(x)=\sum_{k=0}^{|x|+r} a_k(x)\, u(k), 
\qquad
F(x^{(2)})=\sum_{k=0}^{|x|+2+r} a_k(x^{(2)})\, u(k).
\]

The analysis in this situation is more delicate, since the second sum contains two additional non‑zero terms, and a simple change of variables does not align the summation ranges. For this reason we compare the coefficients $a_k(x)$ and $a_k(x^{(2)})$ directly.

\begin{itemize}
\item[(a)] If $0\le k \le r-|x|-2$, then the spheres $S_k(x)$ and $S_k(x^{(2)})$ are entirely contained in $B_r(o_q)$, hence by homogeneity  
\[
a_k(x) =|S_k(x)|=|S_k(x^{(2)})|=a_k(x^{(2)}).
\]

\item[(b)] If $r-|x|-1\le k\le r-|x|$, then $S_k(x)\subset B_r(o_q)$, while $S_k(x^{(2)})$ is partially outside $B_r(o_q)$. In this case one checks that $S_k(x^{(2)})\cap B_r(o_q)=S_k(x^{(2)})\setminus E_k$ where $E_k=\{y \in S_k(x^{(2)}) \ : d(y,o_q)=|x|+2+k\}$. It follows that
\[   a_k(x)=q^{k-1}(q+1) \quad \text{ and }\quad 
a_k(x^{(2)})=q^{k-1}(q+1)-q^k=q^{k-1}.
\]
Moreover, we have that $S_{k+2}(x^{(2)})\cap B_r(o_q)=S_{k+2}(x^{(2)}) \setminus F_k$, where $F_k=\{y \in S_{k+2}(x^{(2)}) \ : d(y,o_q)\in \left\{|x|+2+k,|x|+2+k+2\}\right\}$, thus a direct computation shows that $$a_{k+2}(x^{(2)})=|S_{k+2}(x^{(2)})\cap B_r(o_q)|=q^{k+1}(q+1)-q^{k+2}-(q-1)q^{k}=q^k.$$

\item[(c)] If $r-|x|+1\le k\le r+|x|$, then any vertex in $S_{k+2}(x^{(2)})\cap B_r(o_q)$ is reached by a geodesic starting from $x^{(2)}$ and passing through $x$. Consequently,
\[
a_k(x)=a_{k+2}(x^{(2)}).
\]
\end{itemize}

It follows that
\begin{align*}
&F(x)-F(x^{(2)})=\sum_{k=0}^{r+|x|} a_k(x)\, u(k)
   - \sum_{k=0}^{r+|x|+2} a_{k}(x^{(2)})u(k)\\
   &\overset{(a)}{=}\sum_{k=r-|x|-1}^{r+|x|} a_k(x)\, u(k)
   - \sum_{k=r-|x|-1}^{r+|x|+2} a_{k}(x^{(2)})u(k)\\
   &=\sum_{k=r-|x|-1}^{r-|x|} a_k(x)\, u(k)+\sum_{k=r-|x|+1}^{r+|x|}a_k(x)u(k)-\sum_{k=r-|x|-1}^{r-|x|} a_{k}(x^{(2)})u(k)-\sum_{k=r-|x|+1}^{r+|x|+2} a_{k}(x^{(2)})u(k)\\
&\overset{(b)}{=} \sum_{k=r-|x|-1}^{r-|x|} (q^{k-1}(q+1)-q^{k-1})\, u(k)
   + \sum_{k=r-|x|+1}^{r+|x|} a_k(x) u(k)-\sum_{k=r-|x|+1}^{r+|x|+2} a_k(x^{(2)}) u(k) \\&=\sum_{k=r-|x|-1}^{r-|x|} q^k\, u(k)
   + \sum_{k=r-|x|+1}^{r+|x|} a_k(x) u(k)-\sum_{k=r-|x|-1}^{r+|x|} a_{k+2}(x^{(2)}) u(k+2)\\ &\overset{(b),(c)}{=}\sum_{k=r-|x|-1}^{r-|x|} q^k\, \left(u(k)-u(k+2)\right)+\sum_{k=r-|x|+1}^{r+|x|}a_k(x)\left(u(k)-u(k+2)\right) \ge 0
\end{align*}

because $u$ is 2-step decreasing. Hence, $F(x)\ge F(x^{(2)})$.

\medskip

\noindent
\textbf{Case 3: $|x|=r-1$.} In this borderline case, $r \ge1$, $x \in B_r(o_q)$ and $x^{(2)} \not \in B_r(o_q)$. We compute explicitly the first coefficients:
\[
a_0(x)=1,\quad a_0(x^{(2)})=0,
\qquad 
a_1(x)=q+1,\quad a_1(x^{(2)})=1,
 \qquad a_2(x^{(2)})=1, \qquad a_3(x^{(2)})=q.\]
For $k\ge2$, the same argument as in Case 1 shows that $a_k(x)=a_{k+2}(x^{(2)})$. Therefore,
\begin{align*}
F(x)-F(x^{(2)})&=\sum_{k=0}^{|x|+r}a_k(x) u(k)-\sum_{k=1}^{|x|+2+r}a_k(x^{(2)})u(k)\\&=\sum_{k=0}^1a_k(x) u(k)+\sum_{k=2}^{|x|+r}a_k(x) u(k)-\sum_{k=1}^{3}a_k(x^{(2)})u(k)-\sum_{k=4}^{|x|+2+r}a_k(x^{(2)})u(k)\\
&= u(0)+(q+1)u(1)-u(1)-u(2)-qu(3)  + \sum_{k=2}^{|x|+r} a_k(x)\bigl(u(k)-u(k+2)\bigr)\\ &=\sum_{k=0}^1q^k [u(k)-u(k+2)]+\sum_{k=2}^{|x|+r} a_k(x)\bigl(u(k)-u(k+2)\bigr)\ge 0.
\end{align*}
Thus $F(x)\ge F(x^{(2)})$ also in this case.\\

We have thus proved that $F(x)\ge F(x^{(2)})$ for every $x$, concluding the proof of $ii)$.
\end{proof}

  \begin{center}
            
        \end{center}

\begin{figure}
\begin{tikzpicture}[scale=0.6,
    node/.style={circle,fill=black,inner sep=1.4pt},
    rednode/.style={circle,fill=red,inner sep=1.8pt},
    bluenode/.style={circle,fill=blue,inner sep=1.8pt},
  greennode/.style={
    circle,
    inner sep=1.8pt,
    shading=axis,
    left color=blue!100,
    right color=red!100,
    shading angle=0
},
    edge/.style={thick},
    labelnode/.style={circle,fill=black,inner sep=1.4pt}
]

\def\R{1.2}

\node[labelnode,label={[xshift=7pt] {$o_q$}}] (L0-0) at (0,0) {};

\foreach \i in {0,...,2} {
  \node[node] (L1-\i) at ({90+120*\i}:{1*\R}) {};
  \draw[edge] (L0-0) -- (L1-\i);
}

\foreach \i in {0,...,5} {
  \node[node] (L2-\i) at ({90+60*\i}:{2*\R}) {};
}

\foreach \i in {0,1,2} {
  \pgfmathsetmacro{\cA}{mod(2*\i,6)}
  \pgfmathsetmacro{\cB}{mod(2*\i+1,6)}
  \draw[edge] (L1-\i) -- (L2-\cA);
  \draw[edge] (L1-\i) -- (L2-\cB);
}

\foreach \i in {0,...,11} {
  \node[node] (L3-\i) at ({90+30*\i}:{3*\R}) {};
}

\foreach \i in {0,...,5} {
  \pgfmathsetmacro{\cA}{mod(2*\i,12)}
  \pgfmathsetmacro{\cB}{mod(2*\i+1,12)}
  \draw[edge] (L2-\i) -- (L3-\cA);
  \draw[edge] (L2-\i) -- (L3-\cB);
}

\foreach \i in {0,...,23} {
  \node[node] (L4-\i) at ({90+15*\i}:{4*\R}) {};
}

\foreach \i in {0,...,11} {
  \pgfmathsetmacro{\cA}{mod(2*\i,24)}
  \pgfmathsetmacro{\cB}{mod(2*\i+1,24)}
  \draw[edge] (L3-\i) -- (L4-\cA);
  \draw[edge] (L3-\i) -- (L4-\cB);
}

\draw[dashed,gray] (0,0) circle (4*\R);


\coordinate (base) at (L4-20);

\node[labelnode,label={[yshift=5pt]right:$x$}] (x)
      at ($(base)+(1.3*\R,0)$) {};
\draw[edge] (base) -- (x);

\node[rednode] (xU) at ($(x)+(0,0.9*\R)$) {};
\draw[edge] (x) -- (xU);

\node[rednode] (x1) at ($(x)+(1.3*\R,0)$) {};
\draw[edge] (x) -- (x1);

\node[greennode] at (base) {};

\node[node] (x1U) at ($(x1)+(0,0.8*\R)$) {};
\draw[edge] (x1) -- (x1U);

\node[bluenode] (x1U1) at ($(x1U)+(10:0.9*\R)$) {};
\node[bluenode] (x1U2) at ($(x1U)+(-10:0.9*\R)$) {};
\draw[edge] (x1U) -- (x1U1);
\draw[edge] (x1U) -- (x1U2);

\node[labelnode,label={[yshift=-21pt]:$x^{(2)}$}] (x2)
      at ($(x1)+(1.3*\R,0)$) {};
\draw[edge] (x1) -- (x2);

\node[node] (x2A) at ($(x2)+(10:1.0*\R)$) {};
\node[node] (x2B) at ($(x2)+(-10:1.0*\R)$) {};
\draw[edge] (x2) -- (x2A);
\draw[edge] (x2) -- (x2B);

\node[node] (x3A1) at ($(x2A)+(10:1.0*\R)$) {};
\node[node] (x3A2) at ($(x2A)+(-10:1.0*\R)$) {};
\node[node] (x3B1) at ($(x2B)+(10:1.0*\R)$) {};
\node[node] (x3B2) at ($(x2B)+(-10:1.0*\R)$) {};

\draw[edge] (x2A) -- (x3A1);
\draw[edge] (x2A) -- (x3A2);
\draw[edge] (x2B) -- (x3B1);
\draw[edge] (x2B) -- (x3B2);

\node[bluenode] (x4A1) at ($(x3A1)+(0.9*\R,0)$) {};
\node[bluenode] (x4A2) at ($(x3A2)+(0.9*\R,0)$) {};
\node[bluenode] (x4B1) at ($(x3B1)+(0.9*\R,0)$) {};
\node[bluenode] (x4B2) at ($(x3B2)+(0.9*\R,0)$) {};

\draw[edge] (x3A1) -- (x4A1);
\draw[edge] (x3A2) -- (x4A2);
\draw[edge] (x3B1) -- (x4B1);
\draw[edge] (x3B2) -- (x4B2);


\begin{scope}[shift={(3,-4)}]
    \node[rednode]   at (1.7,0) {};
    \node            at (2.8,0) {$S_1(x)$};

    \node[bluenode]  at (4,0) {};
    \node            at (5.3,0) {$S_3(x^{(2)})$};

    \node[greennode] at (7,0) {};
    \node            at (9.4,0) {$S_1(x)\cap S_3(x^{(2)})$};
\end{scope}

\end{tikzpicture}
\caption{A visual representation of the argument used in the proof of Lemma \ref{step}.}\label{figura}
\end{figure}


\begin{rem}\label{rad2}
Since the heat kernel on \(T_q\) is radial, namely
\(h_t^{(q)}(x,y)=h_t^{(q)}(d(x,y))\) for all \(x,y\in V_q\), 
the previous theorem shows that for every $r \in \mathbb{N}$ the function
\[
x \longmapsto \sum_{y\in B_r(o_q)} h_t^{(q)}(x,y)
\]
is radial and 2-step decreasing on $V_q$.
\end{rem}

We shall also need the following maximum principle.

\begin{lem}\label{lem:punz}
    Assume that $z$ is a subsolution of the heat equation with initial datum zero, i.e.,
    $$ \begin{cases}
        \partial_t z-\Delta z\le 0 \qquad\mathrm{on} \ (0,T]\times V, \\
        z(0,\cdot)\le 0 \ \ \ \ \qquad\mathrm{on} \ V.
 \end{cases}$$
    
     Assume that 
    \begin{align*}
        \lim_{n \to \infty} \sup_{y \in B_n(o)^c} \sup_{t\in[0,T]} z(t,y) \le 0.
    \end{align*}
    
    Then, $z\le 0$ on $[0,T]\times V$.
\end{lem}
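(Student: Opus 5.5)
Since $z$ tends to $0^-$ uniformly at infinity, the argument is a standard weak maximum principle on a large finite ball, combined with a perturbation that absorbs the $\limsup$ condition. Fix $\varepsilon>0$ and choose $N$ such that $z(t,y)\le\varepsilon$ for all $y\notin B_N(o)$ and $t\in[0,T]$. Set $w=z-\varepsilon e^{t}$ (or simply $w=z-\varepsilon$). Since $\Delta$ annihilates constants, $\partial_t w-\Delta w\le -\varepsilon e^t<0$ (respectively $\le 0$). Also $w(0,\cdot)\le -\varepsilon<0$, and $w\le 0$ on $[0,T]\times B_N(o)^c$. It then suffices to show $w\le 0$ on $[0,T]\times B_N(o)$ and let $\varepsilon\to0$.

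The key point is that the ball $B_N(o)$ may be infinite, because the graph is not assumed locally finite. For the tree setting with standard weights, $\deg(x)<\infty$ forces finite degree, so $B_N(o)$ is finite. I will use this in the tree case, which is the case needed for the comparison results. For general $b$, where $B_N(o)$ may be infinite, I will instead replace the ball by the set $\{w>0\}$ restricted suitably. The cleanest uniform way is to note that the supremum of $w$ over $[0,T]\times V$, if positive, is attained. Indeed, $w\le0$ outside $B_N(o)$. If the supremum is positive but not attained, I would instead take a near-maximizer and use the strict inequality $\partial_t w-\Delta w\le-\varepsilon e^t$ with an $\varepsilon/2$ margin argument. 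I expect this attainment issue, and the need to justify differentiability in $t$ pointwise, to be the main technical obstacle. In the tree case it disappears by finiteness and continuity in $t$.

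Assuming attainment, suppose $\max w=w(t^*,x^*)>0$. Then $t^*>0$ because $w(0,\cdot)<0$, and $x^*\in B_N(o)$. At this point $\partial_t w(t^*,x^*)\ge0$, since it is a maximum in time on $(0,T]$, with a one-sided derivative if $t^*=T$. Moreover
\[
\Delta w(x^*)=\frac{1}{\deg(x^*)}\sum_y b_{x^*y}\bigl(w(t^*,y)-w(t^*,x^*)\bigr)\le 0
\]
by maximality. Hence $\partial_t w-\Delta w\ge0$ at $(t^*,x^*)$, contradicting $\partial_t w-\Delta w\le-\varepsilon e^{t^*}<0$. Therefore $w\le0$, that is, $z\le\varepsilon e^{T}$ everywhere. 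Letting $\varepsilon\to0$ gives $z\le0$ on $[0,T]\times V$.

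For the non-attained case on general graphs, I would take a sequence $(t_k,x_k)$ with $w(t_k,x_k)\to\sup w=:M>0$, all lying in $B_N(o)$. I would pass to $\tilde w=w-\delta t$ with small $\delta$ to force the time maximum at each fixed $x$. For fixed $x$, continuity on $[0,T]$ gives attainment in $t$, and one gets $\Delta w(x_k)\le \deg(x_k)^{-1}\sum_y b_{x_ky}(M-w(t_k,x_k))\to0$. Since $\deg\ge d_0$ is not needed here (the normalization divides by $\deg$), this yields $\Delta w\le o(1)$ along the sequence. A time derivative bound at a near-maximum point then leads to the same contradiction in the limit. This part would be written as an Omori–Yau-type approximate maximum argument.
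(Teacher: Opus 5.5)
Your proof is correct in the setting where the lemma is stated and used, namely trees with standard weights (Section~\ref{last}). There $b_{xy}\in\{0,1\}$ and $\deg(x)<\infty$ give finitely many neighbours, so $B_N(o)$ is finite. Hence the positive maximum of $w=z-\varepsilon e^{t}$ on $[0,T]\times B_N(o)$ is attained and is a global maximum, and $0\le\partial_t w-\Delta w\le-\varepsilon e^{t^*}$ is the contradiction you need.

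The paper gives no argument of its own. It invokes Punzo's Proposition~3.3 with $Z=1$, a more general comparison principle involving an auxiliary function $Z$. Your route is a self-contained proof of exactly this special case. It has the advantage of exposing two assumptions the statement uses tacitly: that balls are finite, and that $t\mapsto z(t,x)$ is continuous on the closed interval $[0,T]$.

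The continuity assumption cannot be dropped. Take $z(0,\cdot)=0$ and $z(t,\cdot)=h_t(\cdot,o)$ for $t>0$. All displayed hypotheses then hold (the decay condition by Lemma~\ref{lem:heat}), yet $z(t,o)>0$. Continuity does hold for the function built in Proposition~\ref{prop:sub}, so the lemma's use in the paper is fine.

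You should, however, drop the discussion of general weights $b$, because it is not sound.

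When $B_N(o)$ is infinite, nothing in the hypotheses bounds $w$ from above there. So your claim that a positive supremum is attained has no justification, and your approximate-maximum argument silently needs $M=\sup w<\infty$. When $M<\infty$ it does close, and more simply than you propose. Take $t_k$ maximizing $w(\cdot,x_k)$ on $[0,T]$; then $\partial_t w(t_k,x_k)\ge 0$ and $\Delta w(t_k,x_k)\le M-w(t_k,x_k)\to 0$, which contradicts $\partial_t w-\Delta w\le-\varepsilon$.

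Without local finiteness the decay hypothesis can even be vacuous. On an infinite graph of finite diameter, $B_n(o)^c=\emptyset$ for large $n$. The statement would then assert that every subsolution with nonpositive initial datum is nonpositive, with no growth restriction at all. That is a uniqueness claim your argument does not reach, and the lemma is not meant to cover it. Restrict your proof to locally finite graphs, which is all that is needed.
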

\begin{proof}
This is a particular case of \cite[Proposition 3.3]{Punzo} when $Z=1$. 
\end{proof}

The following result provides the decay at infinity of the heat kernel that will be needed to apply Lemma \ref{lem:punz} in Proposition \ref{prop:sub} below.
 
\begin{lem}\label{lem:heat}

Let \(h_t(x,y)\) be the heat kernel of the semigroup \(e^{t\Delta}\) on a tree satisfying \eqref{degq+1}.
Then, for every $o,y\in V$ and every \(T>0\),
\[
    \lim_{n\to\infty}\ \sup_{{(t,x)\in[0,T]\times B_n(o)^c}} h_t(x,y)=0.
\]
\end{lem}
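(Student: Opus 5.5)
Since the tree has standard weights ($b_{xy}\in\{0,1\}$), we use the power series representation $h_t(x,y)=e^{-t}\sum_{k\ge0}\frac{t^k}{k!}p_k(x,y)$ recalled in Subsection~\ref{Greentrees}. The bound then follows from the trivial vanishing $p_k(x,y)=0$ for $k<d(x,y)$, with no appeal to the comparison results of this section.

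Fix $o,y\in V$, $T>0$, and set $D=d(o,y)$. If $x\in B_n(o)^c$ then $d(x,y)\ge d(x,o)-D> n-D$. Hence, for $n>D$, only the terms with $k\ge n-D$ survive in the series.

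Each $p_k(x,y)$ is bounded uniformly. Indeed, $p_k(x,y)\deg(y)=\mathbb P_x(X_k=y)\le 1$, where $X_k$ is the simple random walk, and therefore
\[
p_k(x,y)\le \frac{1}{\deg(y)}\le \frac{1}{q+1}.
\]
This uses \eqref{degq+1}, though any bound $\deg\ge d_0$ from \eqref{deg} would do.

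Combining the two facts, for every $t\in[0,T]$ and every $x\in B_n(o)^c$,
\[
h_t(x,y)\le \frac{e^{-t}}{q+1}\sum_{k\ge n-D}\frac{t^k}{k!}\le \frac{1}{q+1}\sum_{k\ge n-D}\frac{T^k}{k!}.
\]
The right-hand side does not depend on $(t,x)$. It is the tail of the convergent series $e^T$, so it tends to $0$ as $n\to\infty$, which proves the claim.

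The only point needing care is that the estimate is uniform in $x$ outside the ball, since $x$ ranges over an infinite set. This is handled by the uniform bound on $p_k$ together with the distance lower bound $d(x,y)>n-D$. The case $t=0$ is included: there $h_0(x,y)=\delta_{xy}/\deg(y)$, which vanishes once $n>D$.
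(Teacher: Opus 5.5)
Your proof is correct, and it takes a different and more elementary route than the paper.

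The paper argues softly, in two steps. First, from the Markov property and \eqref{degq+1} it gets $\sum_x h_t(x,y)\le 1/(q+1)$, which gives decay of $h_t(\cdot,y)$ at infinity for each fixed $t$. Second, the bound $|\partial_t h_t(x,y)|=|\Delta h_t(\cdot,y)(x)|\le 2/(q+1)$ makes $t\mapsto h_t(x,y)$ Lipschitz uniformly in $x$. A compactness argument on $[0,T]$, with a case analysis on approximating sequences, then makes the decay uniform in time.

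You instead use finite propagation speed. The expansion $h_t=e^{-t}\sum_k \frac{t^k}{k!}p_k$, together with $p_k(x,y)=0$ for $k<d(x,y)$ and $p_k(x,y)\le 1/\deg(y)$, gives the explicit bound
\[
\sup_{t\in[0,T],\ x\in B_n(o)^c} h_t(x,y)\le \frac{1}{\deg(y)}\sum_{k\ge n-d(o,y)}\frac{T^k}{k!}.
\]

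What your route buys:
\begin{itemize}
\item a quantitative, faster-than-exponential rate in $n$;
\item no need for the sequence and subsequence bookkeeping;
\item no use of the tree structure or of local finiteness, only the nearest-neighbour form of $P$ and \eqref{deg}.
\end{itemize}

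What the paper's route buys is generality in another direction. As the remark accompanying the lemma notes, it relies only on positivity of the kernel, the Markov bound, the lower bound on the measure, and boundedness of the generator on $\ell^\infty$. It therefore also covers semigroups whose generator does not have finite range, for instance subordinated semigroups such as $e^{-t(-\Delta)^s}$. For those, $p_k$-type localization is unavailable and your power-series argument would not apply.
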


 {
\begin{rem}
Inspecting the proof of Lemma \ref{lem:heat}, one sees that neither the tree structure nor the specific form of the Laplacian are used. The argument only relies on the positivity of the heat kernel, the estimate
\[
\sum_{z\in V} h_t(x,z)\mu(z)\le 1,
\]
the lower bound $\mu(x)\ge c>0$, and the boundedness of the generator of the semigroup on $\ell^\infty(\mu)$. Therefore the conclusion remains valid for a much larger class of semigroups on connected locally finite weighted graphs.
\end{rem}}
\begin{proof}
Fix $y\in V$. We say that a sequence $\{x_n\}_n\subset V$ \emph{diverges} (and write $x_n\to\infty$) if for every $r\in\mathbb{N}$ one has $x_n\in B_r(o)^c$ eventually (this is equivalent to say that $\lim_{n \to \infty }d(x_n,o)=+\infty$).

We first prove pointwise decay at fixed time. Let $t'\in[0,T]$. By \eqref{degq+1} and the semigroup is Markovian, we have
\[
    \sum_{x\in V} h_{t'}(x,y)
    \le \frac{1}{q+1}\sum_{x\in V} h_{t'}(x,y)\,\deg(x)
    \le \frac{1}{q+1}.
\]
Let $\{x_n\}$ be any diverging sequence. If $\lim_{n\to\infty} h_{t'}(x_n,y)\neq 0$, then there exist $\varepsilon>0$ and a subsequence $\{x_{n_k}\}$ such that $h_{t'}(x_{n_k},y)\ge \varepsilon$ for all $k$, contradicting the summability above. Therefore,
\begin{equation}\label{eq:spatial_decay_fixed_time}
    \lim_{n\to\infty} h_{t'}(x_n,y)=0
    \qquad \text{for every } t'\in[0,T].
\end{equation}
In particular, $\|h_{t'}(\cdot,y)\|_{\ell^\infty(V)}\le \sum_{x\in V} h_{t'}(x,y)\le 1/(q+1)$.

\medskip

Since $\partial_t h_t(\cdot,y)=\Delta h_t(\cdot,y)$, we obtain, for all $(t,x)  {\in [0,T]\times V}$,
\[
|\partial_t h_t(x,y)|
= |\Delta h_t(\cdot,y) {(x)}|
\le  \frac1{\deg(x)}\sum_{z\sim x}|h_t(z,y)-h_t(x,y)|
\le 2\,\|h_t(\cdot,y)\|_{\ell^\infty(V)}
\le \frac{2}{q+1}.
\]
Hence,
\begin{equation}\label{eq:uniform_Lipschitz}
    |h_{t}(x,y)-h_{s}(x,y)| \le \frac{2}{q+1}\,|t-s|
    \qquad \forall\, s,t\in[0,T],\ \forall\, x\in V.
\end{equation}

\medskip

For each $n$, set
\[
    M_n = \sup_{(t,x)\in[0,T]\times B_n(o)^c} h_t(x,y).
\]
By the definition of supremum, there exist $(t_m,x_m)\in[0,T]\times B_n(o)^c$ such that
\begin{equation}\label{eq:approx_sup}
    \lim_{m\to\infty} h_{t_m}(x_m,y) = M_n.
\end{equation}
Since $[0,T]$ is compact, up to a subsequence we may assume that $\displaystyle{\lim_{m\to\infty} t_m = \tau_n\in[0,T]}$.
By \eqref{eq:uniform_Lipschitz},
\begin{equation}\label{eq:time_continuity}
    \lim_{m\to\infty}\bigl|h_{t_m}(x_m,y)-h_{\tau_n}(x_m,y)\bigr|
    \le \lim_{m\to\infty} \frac{2}{q+1}\,|t_m-\tau_n| = 0.
\end{equation}

If $\{x_m\}$ diverges, then \eqref{eq:spatial_decay_fixed_time} (applied at the fixed time $\tau_n$) gives
\begin{equation}\label{eq:spatial_decay_on_sequence}
    \lim_{m\to\infty} h_{\tau_n}(x_m,y)=0,
\end{equation}
and combining \eqref{eq:time_continuity} with \eqref{eq:spatial_decay_on_sequence} yields $\displaystyle{ \lim_{m\to\infty} h_{t_m}(x_m,y)=0}$.
Together with \eqref{eq:approx_sup}, this would imply $M_n=0$.  Since $\{M_n\}_n$ is non-increasing and nonnegative, it follows that $M_k=0$ for all $k \ge n$, and hence $\lim_{n\to\infty} M_n=0$.

Conversely, if $\{x_m\}$ does not diverge, it admits a bounded subsequence which is eventually constant. Thus, up to extracting a subsequence, there exists $v_n\in B_n(o)^c$ such that $x_m=v_n$ for all large $m$, and by \eqref{eq:approx_sup}--\eqref{eq:time_continuity},
\[
    M_n=\lim_{m\to\infty} h_{t_m}(x_m,y)=h_{\tau_n}(v_n,y).
\]

By compactness of $[0,T]$, up to subsequences we may assume $\tau_n\to\tau'\in[0,T]$. Using \eqref{eq:uniform_Lipschitz},
\[
    \lim_{n\to\infty} \bigl|h_{\tau_n}(v_n,y)-h_{\tau'}(v_n,y)\bigr|
    \le \lim_{n\to\infty} \frac{2}{q+1}\,|\tau_n-\tau'| = 0.
\]
Since $v_n\in B_n(o)^c$, we have $v_n\to\infty$, hence by \eqref{eq:spatial_decay_fixed_time} (with $t'=\tau'$), $\displaystyle{\lim_{n\to\infty} h_{\tau'}(v_n,y)=0}$. Therefore,
\[
    \lim_{n\to\infty} M_n = \lim_{n\to\infty} h_{\tau_n}(v_n,y)=0,
\]
which proves the claim.
\end{proof}

We now introduce the auxiliary function $z$ whose properties will eventually yield the comparison estimate for heat kernel averages over balls stated in Proposition \ref{intestimateGreentrees}.

\begin{prop}\label{prop:sub}
Fix $r \in \mathbb N$, $o \in V$ and $o_q \in V_q$. For all $(t,x)\in (0,T)\times V$, define $$z(t,x)=\sum_{y \in B_r(o)} h_t(x,y) \mathrm{deg}(y)-\sum_{y \in B_r(o_q)}h_t^{(q)}(x_q,y)(q+1),$$ where $x_q$ is any vertex in $V_q$ such that $d_q(x_q,o_q)=d(x,o)$. Then, $z$ is a subsolution of the heat equation with zero initial datum and
\begin{align}\label{hp1prop}
    \lim_{n \to \infty} \sup_{x \in B_n(o)^c} \sup_{t \in [0,T]} z(t,x) \le 0.
\end{align}
\end{prop}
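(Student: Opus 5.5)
Write $z=w-v$, where $w(t,x)=\sum_{y\in B_r(o)}h_t(x,y)\deg(y)$ and $v(t,x)=(q+1)U_t(x_q)$ with $U_t(\xi)=\sum_{y\in B_r(o_q)}h^{(q)}_t(\xi,y)$ for $\xi\in V_q$. Two facts make $v$ well defined on $V$. By Remark \ref{rad2}, $U_t$ is radial with respect to $o_q$, so $v(t,\cdot)$ does not depend on the choice of $x_q$. Moreover $v(t,\cdot)$ is the transplant to $V$ of the radial function $(q+1)U_t$.

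\textbf{Initial datum.} At $t=0$ we have $h_0(x,y)\deg(y)=\delta_x(y)$. Hence $w(0,x)=\chi_{B_r(o)}(x)$. On $T_q$, the kernel with respect to $\deg\equiv q+1$ gives $h^{(q)}_0(\xi,y)=\delta_\xi(y)/(q+1)$. So $v(0,x)=\chi_{B_r(o_q)}(x_q)=\chi_{B_r(o)}(x)$, using $d_q(x_q,o_q)=d(x,o)$. Therefore $z(0,\cdot)=0$.

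\textbf{Subsolution.} Linearity of the heat equation gives $\partial_t w=\Delta w$ on $V$, because each $h_t(\cdot,y)$ solves it and the sum is finite. On $T_q$ we likewise have $\partial_t U_t=\Delta^{(q)}U_t$. By Remark \ref{rad2}, $U_t$ is radial and 2-step decreasing, so Lemma \ref{lap_rad} applies to the transplant:
\[
\Delta v(t,x)\le (q+1)\Delta^{(q)}U_t(x_q)=(q+1)\partial_t U_t(x_q)=\partial_t v(t,x).
\]
Hence $\partial_t z-\Delta z=-(\partial_t v-\Delta v)\le 0$. Two routine points remain: differentiating the transplant in $t$ pointwise, which is immediate, and checking that $\Delta$ of the transplant is computed from the neighbour values, which is exactly what Lemma \ref{lap_rad} does.

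\textbf{Decay at infinity.} Since $v\ge0$, we get $z\le w$. The ball $B_r(o)$ is finite: $\deg$ is finite and the weights are in $\{0,1\}$, so the tree is locally finite. Thus $w$ is a finite sum of terms $h_t(x,y)\deg(y)$. By Lemma \ref{lem:heat}, each term satisfies $\sup_{t\in[0,T],\,x\in B_n(o)^c}h_t(x,y)\to0$ as $n\to\infty$. Summing the finitely many terms gives \eqref{hp1prop}.

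The only delicate step is the subsolution inequality. It depends on the 2-step monotonicity of $U_t$, which is supplied by Lemma \ref{step} through Remark \ref{rad2}, and on the direction of the inequality in Lemma \ref{lap_rad}. Once those are in place, the argument is bookkeeping with the normalizations of the kernels ($\deg(y)$ versus $q+1$).
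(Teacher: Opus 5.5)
Your proof is correct and follows the paper's argument. It uses the same decomposition: the heat evolution of $\chi_{B_r(o)}$ on $V$ minus the transplanted homogeneous-tree solution, whose supersolution property comes from Remark \ref{rad2} and Lemma \ref{lap_rad}, with the decay obtained from Lemma \ref{lem:heat}. Your decay step is slightly leaner than the paper's, which bounds $z$ by a multiple of $\sup_{y\in B_r(o)}h_t(x,y)+\sup_{y\in B_r(o_q)}h^{(q)}_t(x_q,y)$ and so applies Lemma \ref{lem:heat} on both trees, whereas you discard the nonpositive subtracted term and need the decay only on $V$.
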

\begin{proof}

 Consider the solutions of the Cauchy problems
\[
\begin{cases}
\partial_t u-\Delta^{(q)}u=0 & \text{on } (0,T)\times V_q,\\
u_0=\chi_{B_r(o_q)}
\end{cases}
\qquad
\begin{cases}
\partial_t v-\Delta v=0 & \text{on } (0,T)\times V,\\
v_0=\chi_{B_r(o)}.
\end{cases}
\]
They are given by
\[
u(t,x)=(q+1)\sum_{y\in B_r(o_q)} h_t^{(q)}(x,y),
\qquad \text{and} \qquad
v(t,x)=\sum_{y\in B_r(o)} h_t(x,y)\deg(y).
\]
 Given a vertex $x \in V$ we denote by $x_q$ any vertex in $V_q$ such that $d(x,o)=d_q(x_q,o_q)$.
 
Define
\[
V\ni x \mapsto  w(t,x)=\sum_{y \in B_r(o_q)}h_t^{(q)}(x_q,y) (q+1)
\]
We show that $w$ is a supersolution of the Cauchy problem on $V$.
By Remark \ref{rad2}, the function
\(w(t,\cdot)\) is radial and \(2\)-step decreasing on \(V_q\). Hence,
Lemma \ref{lap_rad} yields
$
\Delta w(t,\cdot)(x)\le \Delta^{(q)} w(t,\cdot)(x_q)
$
and, in turn, for all $ (t,x)\in (0,T)\times V$, we get
\begin{align*}
    \partial_t w(t,x)&=\sum_{y \in B_r(o_q)}\partial_th^{(q)}_t(x_q,y)(q+1)=\sum_{y \in B_r(o_q)}\Delta^{(q)}h^{(q)}_t(\cdot,y)(x_q)(q+1)\\&=\Delta^{(q)}\bigg(\sum_{y \in B_r(o_q)}h^{(q)}_t(\cdot,y)(q+1)\bigg)(x_q)\ge \Delta\bigg(\sum_{y \in B_r(o_q)}h^{(q)}_t(\cdot,y)(q+1)\bigg)(x)=\Delta w(t,x).
\end{align*}
In particular, for all $ (t,x)\in (0,T)\times V$, $z=v-w$ satisfies:
\[
z(0,x)=\chi_{B_r(o)}(x)-\chi_{B_r(o_q)}(x_q)=0,
\qquad
\partial_t z=\partial_t v-\partial_t w\le \Delta z.
\]

   It remains to prove \eqref{hp1prop}. Observe that
    \begin{align*}
        z(t,x) \le 2\mathrm{deg}(B_r(o))z'(t,x)\,.
    \end{align*}
     Then \eqref{hp1prop} follows once proved that $z'(t,x)=\sup_{y \in B_r(o)}h_t(x,y)+\sup_{y \in B_r(o_q)}h_t^{(q)}(x_q,y)$ satisfies \eqref{hp1prop}. Since the suprema appearing in the definition of $z'$ are in fact  maxima, this follows by Lemma \ref{lem:heat} which implies that for each fixed $y \in B_r(o)$ and $y_q \in B_r(o_q)$ 
    \begin{align*}
      \lim_{n \to \infty} \sup_{x \in B_n(o)^c}\sup_{t \in [0,T]} h_{t}(x,y)=0\quad \text{and}\quad \lim_{n \to \infty} \sup_{x \in B_n(o_q)^c}\sup_{t \in [0,T]} h_{t}^{(q)}(x,y)=0\,.
    \end{align*}  
\end{proof}

We can now give the proof of Proposition \ref{intestimateGreentrees} by passing from the heat kernel comparison estimates to the corresponding estimates for the fractional Green function.

{\bf Proof of Proposition \ref{intestimateGreentrees}.}

   Inequality \eqref{comp:heat} directly follows by Proposition \ref{prop:sub} and Lemma \ref{lem:punz}. To prove \eqref{comp:green} it suffices to observe that
\begin{align*}
    \sum_{y\in B_r(o)}\GM^s(x,y) \mathrm{deg}(y)&=\int_{0}^\infty \sum_{y\in B_r(o)}h_t(x,y)\mathrm{deg}(y)\frac{dt}{t^{1-s}} \\&\le \int_{0}^\infty \sum_{y\in B_r(o_q)}h_t^{(q)}(x_q,y)(q+1)\frac{dt}{t^{1-s}}=\sum_{y\in B_r(o_q)}\GM^s_q(x_q,y)(q+1).
\end{align*}
\qed

\par\bigskip\noindent
\textbf{Acknowledgments. }The authors  are members of the Gruppo Nazionale per l'Analisi Matematica, la Probabilit\`a e le loro Applicazioni (GNAMPA) of the Istituto Nazionale di Alta Matematica (INdAM).


\end{document}